\title[Rigidification of Connective Comodules]{Rigidification of Connective Comodules}
\author[Maximilien P\'eroux]{Maximilien P\'eroux}
\theoremstyle{definition}
\newtheorem{defi}{Definition}[section]
\numberwithin{equation}{section}
\theoremstyle{plain}
\newtheorem{thm}[defi]{Theorem}
\newtheorem{prop}[defi]{Proposition}
\newtheorem{lem}[defi]{Lemma}
\newtheorem{cor}[defi]{Corollary}
\renewcommand{\o}{\otimes}
\newcommand{\I}{\mathbb I}
\newcommand{\Ext}{\mathsf{Ext}_C}
\newcommand{\cotor}{\mathsf{CoTor}_C}
\newcommand{\id}{\mathsf{id}}
\newcommand{\D}{\mathsf{D}}
\newcommand{\op}{^\mathsf{op}}
\newcommand{\M}{\mathsf{M}}
\newcommand{\DDk}{\mathcal{D}^{\geq 0}(\k)}
\newcommand{\ccobars}[1]{\Omega^\bullet (#1, C, C)}
\newcommand{\ccobar}[1]{\Omega(#1, C, C)}
\newcommand{\cobar}[2]{\Omega(#1, C, #2)}
\newcommand{\ccotens}{{{\square}_C}}
\newcommand{\cotens}{\square}
\newcommand{\dcotens}{\widehat{\square}_C}
\newcommand{\N}{\mathscr{N}}
\newcommand{\A}{\mathsf{A}}
\newcommand{\holim}{\mathsf{holim}}
\renewcommand{\lim}{\mathsf{lim}}
\newcommand{\colim}{\mathsf{colim}}
\newcommand{\smodk}{\mathsf{sMod}_\k}
\newcommand{\Hom}{\mathsf{Hom}}
\newcommand{\Homc}{\mathsf{Hom}_C}
\newcommand{\pull}{\arrow[dr, phantom, "\lrcorner", very near start]}
\renewcommand{\k}{\Bbbk}
\newcommand{\ch}{\mathsf{Ch}}
\newcommand{\chfo}{{\ch_\k}}
\newcommand{\comod}{\mathsf{CoMod}}
\newcommand{\W}{\mathsf{W}}
\newcommand{\Path}{s^{-1}\mathsf{Cone}}
\newcommand{\comodinf}{\mathcal{CoMod}}
\newcommand{\Dinf}{\mathcal{D}}
\newcommand{\Cinf}{\mathcal{C}}
\newcommand{\modinf}{\mathcal{Mod}}
\tikzset{
    labl/.style={anchor=south, rotate=-32, inner sep=.9mm}
}
\tikzset{
    labll/.style={anchor=south, rotate=32, inner sep=.9mm}
}
\begin{document}

 \address{Department of Mathematics, Michigan State University, 619 Red Cedar Road, East Lansing, MI 48824, USA}
\email{peroux@msu.edu}

    \makeatletter
\@namedef{subjclassname@2020}{\textup{2020} Mathematics Subject Classification}
\makeatother

\subjclass[2020]{16T15, 18M05, 18G35, 18N40, 18N70, 55P42, 55P43, 55U15, 55T20, 57T30} 
   
\keywords{comodule, derived cotensor product, cobar construction, rigidification, model category, $\infty$-category}

\begin{abstract} 
Let $\k$ be a commutative ring with global dimension zero.
We show that we can rigidify homotopy coherent comodules in connective modules over the Eilenberg-Mac Lane spectrum of $\k$. That is, the $\infty$-category of homotopy coherent comodules is represented by a model category of strict comodules in non-negative chain complexes over $\k$. These comodules are over a coalgebra that is strictly coassociative and simply connected.
The rigidification result allows us to derive the notion of cotensor product of comodules and endows the $\infty$-category of comodules with a symmetric monoidal structure via the two-sided cobar resolution.
\end{abstract}

\maketitle
 

\section{Introduction}

\subsection*{Rigidifying multiplications}
It has long been understood that one can derive the usual tensor product of modules over a commutative ring $R$ to obtain a monoidal structure in the derived category of $R$-modules.
The two-sided bar resolution $B(M,R, N)$ provides an explicit model for the derived tensor product of $R$-modules $M$ and $N$. The homology of $B(M,R, N)$ can be computed as $\mathsf{Tor}_*^R(M, N)$.
More generally, symmetric monoidal model categories provide a symmetric monoidal structure on their associated homotopy categories given by the derived tensor product, see \cite[4.3.2]{hovey}. 
Thanks to the monoidal structure on the homotopy categories, one can consider algebraic structures such as rings, algebras or modules that are associative and unital up to homotopy. However, higher homotopy coherences capture more interesting data. For instance a grouplike $\mathbb{E}_n$-algebra in spaces is an $n$-fold loop space, see \cite{geom}.

In this realization comes an obstacle: how does one define such an algebraic structure? One would need not only to specify a multiplication and unit, but also all the homotopy coherences. 
Therefore there has been a growing interest to describe all the data of homotopy coherences. 
The work of \cite{EKMM, SS, MMSS, MM, convenient} shows that, in stable homotopy theory, we can conveniently choose a model category so that the homotopy coherence can be represented by a strictly associative, commutative and unital algebraic object.
In other words one can rigidify the multiplication so that specifying the data amounts only to define the multiplication and unit.
Higher categories expand on this approach: any presentably symmetric monoidal $\infty$-category is the Dwyer-Kan localization of a combinatorial symmetric monoidal model category, see \cite{NSpres}. Moreover, given some assumptions on the combinatorial symmetric monoidal model category, homotopy coherent associative or commutative algebras can be rigidified and modeled by strictly associative and unital algebras. See for instance \cite[4.1.8.4, 4.3.3.17, 4.5.4.7]{lurie1} or \cite{hinrectification}.

\subsection*{The rise of coalgebraic structures}
Valuable geometric information can be stored by considering homotopy coherent coalgebraic structures. 
For instance, it was shown in \cite[1.3]{corecognition} that a $2$-connected space is equivalent to a suspension if and only if it is an $\mathbb{A}_\infty$-cogroup in pointed spaces. Extending the results in \cite{mandello}, the work of \cite{yuan} uses homotopy coherent coalgebras and Frobenius actions to show that the homotopy of a simply connected $p$-complete finite space is entirely determined by some $\mathbb{E}_\infty$-coalgebra on the chain complexes. Moreover, $\mathbb{A}_\infty$-ring spectra are enriched in $\mathbb{A}_\infty$-coalgebras \cite{coalgenr}.

As in algebras, we need to construct homotopy coherent coalgebraic structures. In particular, we would want to rigidify homotopy coherent comultiplications and coactions. 
However, we face multiple obstacles. Firstly, a common assumption for algebras and modules is that the monoidal product is closed and preserves colimits. So in order to dualize the results for coalgebras and comodules, we would need a monoidal product that preserves limits, which in practice is never the case.
Moreover, it is difficult to endow a model structure on the categories for coalgebras and comodules, see \cite{left1, left2, left3}. In the cases where a model structure does exist, the author has shown that they may not model the correct $\infty$-category. For instance, strictly coassociative coalgebras in symmetric spectra cannot model $\mathbb{A}_\infty$-coalgebras in spectra, see \cite{perouxshipley} and \cite{coalginDK}. This raises an issue as we would want to give a monoidal structure on homotopy coherent comodules. If one wanted to mimic the monoidal structure on modules given by the derived relative tensor product, one could be tempted to dualize the result of \cite[4.5.2.1]{lurie1} but it would require the underlying tensor product to preserve totalizations. In the underived setting, the cotensor product of comodules only provides a monoidal structure if considered in a flat setting. Attempts to consider a derived cotensor product as a two-sided cobar construction was considered in \cite[4.3]{cobarr}. In \cite{chiral}, the authors forced the tensor product to commute with totalizations but the setting is limiting in practice.

\subsection*{Results}
Our main result here shows that we can rigidify certain homotopy coherent coactions of comodules over a coalgebra. Let $\k$ be a commutative ring of global dimension zero, i.e. a finite product of fields, and denote $H\k$ its Eilenberg-Mac Lane spectrum. We consider $\Dinf^{\geq 0}(\k)$ the symmetric monoidal $\infty$-category of \emph{connective} modules over $H\k$. It is represented by the symmetric monoidal model category of non-negative chain complexes over $\k$, denoted $\chfo$, in which weak equivalences are quasi-isomorphisms. In other words, the Dwyer-Kan localization of $\chfo$ is  $\Dinf^{\geq 0}(\k)$.
If $C$ is a \emph{simply connected} coalgebra in $\chfo$, then we show that the $\infty$-category of (homotopy coherent) comodules in $\Dinf^{\geq 0}(\k)$ over $C$ is represented by a model category of (strictly coassociative) differential graded comodules over $C$ in which weak equivalences are quasi-isomorphisms of $C$-comodules.

\begin{thm}[{Theorem \ref{main thm}}]\label{THEOREM 1}
Let $C$ be a simply connected differential graded coalgebra over $\k$. Then right $C$-comodules in the $\infty$-category $\DDk$ of connective $H\k$-modules are quasi-isomorphic to strictly coassociative differential graded connective comodules over $C$.
\end{thm}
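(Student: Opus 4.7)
The plan is to realize the correspondence as an equivalence of $\infty$-categories between the underlying $\infty$-category of a model category of strict differential graded $C$-comodules in $\chfo$ and the $\infty$-category $\comodinf_C(\DDk)$ of homotopy coherent $C$-comodules. The argument has three main components: building the model structure, constructing a comparison functor, and verifying that it is an equivalence by means of the two-sided cobar resolution.

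First, I would equip the category $\comod_C$ of strict right dg $C$-comodules with a model structure whose weak equivalences are quasi-isomorphisms. This should arise as a right-induced structure along the cofree-forgetful adjunction between $\chfo$ and $\comod_C$, with fibrations and weak equivalences created by the forgetful functor. The fact that $\k$ is a finite product of fields makes $\chfo$ semisimple enough that this transfer works without obstruction, and the cofree comodule functor $-\otimes C$ produces sufficiently many fibrant objects.

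Second, I would construct a comparison functor from the underlying $\infty$-category of $\comod_C$ to $\comodinf_C(\DDk)$. The Dold-Kan correspondence, promoted to a lax symmetric monoidal equivalence via the Eilenberg-Zilber shuffle map, sends the strict coalgebra $C$ to a coalgebra object in $\DDk$. A strict coaction $M \to M \otimes C$ then canonically gives rise to a homotopy coherent coaction, and the comparison functor is assembled from this underlying data, ideally by taking dg nerves of fibrant-cofibrant subcategories.

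Third, to verify the equivalence, the two-sided cobar resolution $\cobars{M}{C}$ serves as the bridge. On the strict side, its totalization provides a canonical resolution of $M$ by cofree comodules; on the $\infty$-categorical side, it is the comonadic cosimplicial object associated to the cofree-forgetful adjunction. The simply connected hypothesis, which forces $C_0 = \k$ and $C_1 = 0$, ensures that the cobar tower truncates in each fixed homological degree and therefore converges in $\chfo$. This convergence, combined with a degreewise comparison, should deliver essential surjectivity (every homotopy coherent comodule is realized by a strict one) and full faithfulness (mapping spaces on both sides are computed by the derived cotensor product via the cobar tower).

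The central obstacle is precisely the convergence of the cobar tower: unlike the bar resolution for modules, the cobar tower does not stabilize in low homological degree without a connectivity hypothesis on $C$, and this is the essential reason rigidification fails for general coalgebras, as noted in \cite{coalginDK}. Showing that, under simple connectivity, the totalization formed in $\chfo$ coincides with the homotopy limit computed in $\DDk$ is where the heart of the proof lies; once this is in place, the equivalence of $\infty$-categories follows, and the subsequent derivation of the cotensor product and symmetric monoidal structure can be read off from the model-categorical cobar construction.
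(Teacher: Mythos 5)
Your outline matches the paper's in broad strokes (strict model category, comparison functor from the symmetric monoidal localization, cobar resolution plus simple connectivity as the engine), but two of your steps contain genuine errors, and the second one sits exactly where the real difficulty of the theorem lives.

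First, the model structure. You propose to \emph{right}-induce along the cofree--forgetful adjunction, ``with fibrations and weak equivalences created by the forgetful functor.'' This cannot work: the forgetful functor $U:\comod_C\rightarrow\chfo$ is a \emph{left} adjoint (its right adjoint is $-\otimes C$), so the transferred structure is necessarily \emph{left}-induced, with cofibrations (monomorphisms) and weak equivalences created by $U$ (Proposition \ref{prop: comodules have a nice model cat}). Fibrations of comodules are emphatically \emph{not} detected underlying: an epimorphism of comodules is a fibration if and only if its kernel is fibrant (Lemma \ref{lem: Bousfield}), and fibrant comodules are exactly the coflat ones (Proposition \ref{prop: coflat=fibrant}), which is a nontrivial condition. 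Identifying and controlling the fibrations --- via the Postnikov towers of Theorem \ref{thm: Postnikov tower of Comodules} --- is the technical heart of the paper; a model structure in which $U$ created fibrations would make most of that work unnecessary, and no such structure exists.

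Second, the verification of the equivalence. You correctly locate the obstacle (the totalization of $\ccobars{X}$ formed in comodules need not agree with the one formed underlying, because $U$ does not preserve infinite limits), but your proposed resolution is underdeveloped on both fronts. Full faithfulness via ``mapping spaces computed by the derived cotensor product'' is not right: mapping spaces of comodules are a cohom/$\mathsf{Ext}$-type construction, not a cotensor, and the paper sidesteps computing them altogether by invoking the Barr--Beck--Lurie comonadicity theorem (Theorem \ref{thm: lurie comonad}), which reduces everything to showing that $X\rightarrow\ccobar{X}$ is an equivalence \emph{of comodules} (Proposition \ref{prop: gijs condition} and Lemma \ref{lem: canonical res ofr comod works}). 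For that last point, degreewise truncation of the conormalized cobar complex is a reasonable idea, but to convert it into a proof you would still need to know that finite limits of comodules are computed underlying, that degreewise-stabilizing towers have their limits preserved by $U$, and that the relevant tower consists of fibrations of comodules --- which circles back to the fibration problem above. The paper instead runs an induction over the Postnikov tower of $X$, reducing to the cofree case (where the extra codegeneracy is a comodule map, so the cobar splits in $\comod_C$) and to homotopy pullbacks along cofree comodules. Without either that induction or a worked-out substitute, your argument does not close.
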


One of the main application of the above result is that it allows us to define a symmetric monoidal structure on the $\infty$-category of $C$-comodules in $\DDk$. It is called the \emph{derived cotensor product} and is given by a two-sided cobar resolution.
As recalled above, simply dualizing the results for modules seen in  \cite[4.5.2.1]{lurie1} is not possible, as it would require totalizations to commute with the monoidal product, which is \emph{not} the case in $\DDk$.
Theorem \ref{THEOREM 1} allows us to bypass this difficulty and use an explicit model for the derived cotensor product.


\begin{thm}[{Theorem \ref{thm: derived cotensor product}}]\label{THEOREM 2}
Let $C$ be a simply connected cocommutative differential graded coalgebra over $\k$. The $\infty$-category $\comodinf_C(\DDk)$ of $C$-comodules in $\DDk$ is endowed with a symmetric monoidal structure induced by $\dcotens$, the derived cotensor of comodules  which is given by the two-sided cobar resolution: $M\dcotens N \simeq \cobar{M}{N}$. Moreover:
\begin{itemize}
\item \textup{Theorem \ref{thm: EMSS}:} there is an Eilenberg-Moore spectral sequence computing $H_*(X\dcotens Y)$ with an $E^2$-page given by $\mathsf{CoTor}_{H_*(C)}(H_*(X), H_*(Y))$;
\item \textup{Proposition \ref{prop: change of coalgebras}:} if $C$ and $D$ are quasi-isomorphic simply connected cocommutative differential graded coalgebras, then $\comodinf_C(\DDk)$ and $\comodinf_D(\DDk)$ are equivalent as symmetric mono\-idal $\infty$-categories.
\end{itemize}
\end{thm}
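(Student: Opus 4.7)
My plan is to rigidify the problem via Theorem \ref{THEOREM 1} and work throughout with the model category of strict differential graded $C$-comodules, where the cotensor product $\ccotens$ is an explicit algebraic equalizer. For $C$ cocommutative, the symmetry of $\otimes$ restricts to a symmetry of $\ccotens$ with unit $C$, giving a strict symmetric monoidal structure on the category of comodules. To obtain the asserted structure on $\comodinf_C(\DDk)$, I would verify a pushout-product axiom for $\ccotens$ against the model structure used in Theorem \ref{THEOREM 1}, so that the symmetric monoidal structure descends to the underlying $\infty$-category together with a derived functor $\dcotens$ satisfying the correct universal property.

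For part (1), the derived cotensor $X \dcotens Y$ should be computed as $X \ccotens Y'$ for a suitable fibrant replacement $Y'$ in the model structure on comodules. I would identify the totalization of the two-sided cobar $\cobars{X}{Y}$ with such a replacement: each cosimplicial degree is of the form $X \otimes C^{\otimes n} \otimes Y$, which is cofree as a $C$-comodule and hence fibrant in the comodule model structure, so the totalization provides a cofree resolution of $Y$ in the derived sense. Convergence of the tower of partial totalizations—and thus the identification $X \dcotens Y \simeq \cobar{X}{Y}$—comes from a Reedy-type argument in $\chfo$ forcing stabilization in each homological degree, and this is precisely where the simple connectivity of $C$ is used.

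For part (2), I would invoke the Bousfield--Kan spectral sequence for the cosimplicial chain complex $\cobars{X}{Y}$, which converges to $H_*(X \dcotens Y)$ by part (1). Since $\k$ is a finite product of fields, the Künneth formula gives $H_*(X \otimes C^{\otimes n} \otimes Y) \cong H_*(X) \otimes H_*(C)^{\otimes n} \otimes H_*(Y)$, so the normalized cochain complex of $H_*\cobars{X}{Y}$ is isomorphic to the cobar complex computing $\mathsf{CoTor}_{H_*(C)}(H_*(X), H_*(Y))$, yielding the desired $E^2$-page. For part (3), a quasi-isomorphism $C \to D$ of simply connected cocommutative DG coalgebras should induce a Quillen equivalence between the respective comodule categories via a corestriction/extension of scalars adjunction, and the symmetric monoidal compatibility follows from the naturality of the cobar construction in the coalgebra variable combined with a spectral sequence comparison on $E^2$ via (2).

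The main obstacle, and the point on which the whole argument hinges, is part (1): showing that the cobar totalization really computes the derived cotensor and does so coherently enough to underlie a symmetric monoidal structure. The difficulty is exactly the one flagged in the introduction—totalization does not commute with the monoidal product in $\DDk$—so the simple-connectivity hypothesis and the passage through strict models afforded by rigidification are essential ingredients rather than technical conveniences.
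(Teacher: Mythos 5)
There is a genuine gap at the foundational step of your argument. You propose to obtain the symmetric monoidal structure on $\comodinf_C(\DDk)$ by ``verify[ing] a pushout-product axiom for $\ccotens$ against the model structure,'' i.e.\ by making $(\comod_C(\chfo),\ccotens,C)$ into a (co)monoidal model category in the Quillen-bifunctor sense. This fails: the functor $X\ccotens -$ does \emph{not} preserve fibrations even when $X$ is fibrant. Applying it to the generating fibration $0\to S^0(\k)\otimes C$ yields the map $0\to X\ccotens C\cong X$, which is not a fibration in general (already for $C=\k$). Moreover $X\ccotens-$ is not a right adjoint unless $X$ is quasi-finite, so there is no (co)Quillen bifunctor and no pushout-product/pullback-cotensor axiom to verify; see Remarks \ref{rem: cotensor is not a right adjoint} and \ref{rem: cotensor does not preserve fibrations}. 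The correct mechanism is strictly weaker: one only shows that for $X$ fibrant, $X\ccotens-$ preserves fibrant objects and weak equivalences \emph{between fibrant objects} (Definition \ref{defi: weak comonoidal model cat}), and then applies the symmetric monoidal Dwyer--Kan localization to the subcategory of fibrant objects (Proposition \ref{prop: compatibility of DK in sym mon}). Verifying even these weaker axioms is the technical heart of the section and is entirely absent from your proposal: it requires the identification of fibrant $C$-comodules with \emph{coflat} ones (Proposition \ref{prop: coflat=fibrant}, proved via Bousfield's Lemma \ref{lem: Bousfield} and a chain of $\cotor/\Ext$-vanishing equivalences), which gives both that $X\ccotens Y$ is again fibrant for $X,Y$ fibrant (Corollary \ref{cor: cotensor of fibrant is fibrant}) and, via the Eilenberg--Moore spectral sequence, that $X\ccotens-$ preserves quasi-isomorphisms (Corollary \ref{cor: coflat preserves weak equivalences}).

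Two further points. First, your derivation of part (2) from part (1) via a Bousfield--Kan spectral sequence risks circularity: in the paper the EMSS is proved \emph{before} and independently of the monoidal structure and of part (1), as a purely algebraic two-spectral-sequences argument on the conormalized cobar double complex $\ncobar{\bullet}{X}{Y}$, with $X$ coflat forcing one of the two spectral sequences to collapse; it is then an \emph{input} to Corollary \ref{cor: coflat preserves weak equivalences} and hence to the existence of $\dcotens$. (Simple connectivity of $C$ enters exactly where you expect, through the vanishing $(\ncobar{q}{X}{Y})_p=0$ for $p\leq 2q-1$, which gives convergence.) Second, your claim that each cosimplicial level of $\cobars{X}{Y}$ is cofree, hence fibrant, is not correct for general $Y$: the $C$-coaction on $\Omega^n(X,C,Y)\cong X\otimes C^{\otimes n}\otimes Y$ is induced by the coaction of $Y$, so these levels are cofree only when $Y$ is; the paper's proof of Theorem \ref{thm: formula cobar/cotensor} instead reduces to the cofree case by an induction along the Postnikov tower of $Y$, which is where the real work of part (1) lies.
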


In \cite{cothhshadow}, the above results are applied to provide a bicategorical trace on coHochschild homology. Theorems \ref{THEOREM 1}  and \ref{THEOREM 2} are crucial to define a bicategory on derived connective bicomodules in chain complexes.
Although (homotopy) colimits of modules are well understood, understanding (homotopy) limits of comodules is much more challenging. A key observation in our proof is that we can determine homotopy limits of certain explicit cosimplicial objects in comodules.

\subsubsection*{Simplicial comodules} The $\infty$-category $\DDk$ of connective $H\k$-modules is also represented by the symmetric monoidal category of simplicial $\k$-modules $\smodk$ in which the weak equivalences are weak homotopy equivalences. 
All arguments in this paper remain valid if we replace $\chfo$ by $\smodk$. In particular Theorems \ref{THEOREM 1} and \ref{THEOREM 2} have a simplicial version. However $\smodk$ and $\chfo$ are not equivalent as monoidal categories, but they are after we derive their tensor products. We show in \cite{postnikovperoux} there is a Dold-Kan correspondence between simplicial comodules and differential graded connective comodules which induces an equivalence of their derived cotensor products.
Therefore the induced derived cotensor product of comodules in $\DDk$ in Theorem \ref{THEOREM 2} is independent from choosing between $\smodk$ or $\chfo$.

\subsubsection*{A rigidification for connective bicomodules} For the sake of simplicity, we focus in this paper on right comodules over a coalgebra, but the above theorems remain true for left comodules. In fact, given $C$ and $D$ simply connected differential graded coalgebras, we can obtain a rigidification result as in Theorem \ref{THEOREM 1} for $(C, D)$-bicomodules, as they are equivalent to left $(C\otimes D^\mathsf{op})$-comodules. In particular, we can drop the cocommutativity requirement of Theorem \ref{THEOREM 2} and obtain a derived cotensor product for bicomodules over $C$ which provides a monoidal (but not symmetric) structure. We provide further details in \cite{cothhshadow}.
 
\subsubsection*{On the global dimension zero condition} Throughout this paper, we work exclusively with $\k$ a commutative ring of global dimension zero, i.e. a finite product of fields. There are several reasons this condition is imposed. First, it induces that every object is cofibrant and fibrant in $\chfo$, and the projective and injective model structures are equal. In particular, the model structure on comodules is left-induced from a nice monoidal model category. In \cite{left2}, it was shown that the model structures for comodules are left-induced from injective model structures which are in general not monoidal model categories. This creates several issues to understand the induced homotopy theory on comodules. Moreover, as  every module is flat, the tensor product preserves finite limits. This allows us to understand finite limits of comodules, which is essential for Theorem \ref{THEOREM 1}. Finally,  the cotensor product of comodules is \emph{not} a comodule without the flat setting, hence Theorem \ref{THEOREM 2} cannot be true. Notice though that in \cite[6.4.7]{phd}, we proved an unbounded version of Theorem \ref{THEOREM 1}, where $\k$ is replaced by any commutative ring and $C$ is a differential graded coalgebra that is perfect as a chain complex.

\subsubsection*{On the simply connected condition}
A reader familiar with the notion of Koszul duality might not be surprised of the requirement of our coalgebra $C$ to be simply connected, as there is an equivalence between comodules over $C$ with certain modules over the cobar construction $\Omega (\k, C, \k)$, see \cite{leo}. However our main results would not follow through this duality as the relation is not well established yet in the $\infty$-categorical setting, and even so, it is in general not a monoidal equivalence.

\subsection*{Outline} 
We introduce the model structure on the  category of comodules in Section \ref{Section: cobar}, and then provide an explicit fibrant replacement given by the two-sided cobar resolution in Theorem \ref{thm: fibrant replacement as cobar}. 
We prove Theorem \ref{THEOREM 1} in Section \ref{Section: rigidifying}, and we apply this result to define the derived cotensor product of Theorem \ref{THEOREM 2} in Section \ref{Section: derived cotensor}.

\subsection*{Acknowledgment}
The results here are part of my PhD thesis \cite{phd}, and as such, I would like to express my gratitude to my advisor Brooke Shipley for her help and guidance throughout the years. I am also grateful to Pete Bousfield who read earlier versions of this paper and suggested the result Lemma \ref{lem: Bousfield}. Special thanks to initial conversations with Ben Antieau, Jonathan Beardsley and Kathryn Hess that sparked some ideas in this paper. Thanks to David Chan, Sarah Klanderman and Elden Elmanto for their comments.

\subsection*{Notation}
We begin by setting notation that we  use throughout and recalling some elementary notions of the theory of $\infty$-categories, following \cite{htt, lurie1}. The notions of \emph{symmetric monoidal $\infty$-categories} and \emph{$\infty$-operads} are defined respectively in \cite[2.0.0.7, 2.1.1.10]{lurie1}. By an \emph{ordinary} category, we shall refer to a category in the usual sense of the term. By an \emph{$\infty$-category}, we mean a quasicategory as in \cite{htt}. An ordinary category shall be denoted with bold letters $\mathsf{C}$, $\mathsf{A}$, etc, while an $\infty$-category shall be denoted with cursive letters $\mathcal{C}$, $\mathcal{A}$, etc.

\begin{enumerate}

\item The letter $\k$ shall always denote a commutative ring of global dimension zero, i.e. a finite product of fields. 
In the literature, such rings are referred to as \emph{commutative semisimple rings}.

\item Let $\chfo$ be the category of non-negative chain complexes of $\k$-modules (graded homologically). The category is endowed with a symmetric monoidal structure. The tensor product of two chain complexes $X$ and $Y$ is defined by:
\(
(X\otimes Y)_n=\bigoplus_{i+j=n} X_i\otimes_\k Y_j,
\)
with differential given on homogeneous elements by:
\(
d(x\otimes y)=dx \otimes y + (-1)^{\vert x \vert}x\otimes dy.
\)
We denote the tensor simply as $\otimes$. The monoidal unit is denoted $\k$, which is the chain complex $\k$ concentrated in degree zero.

\item Given $V$ a graded $\k$-module, we define
    \( T(V)=\bigoplus_{n\geq 0} V^{\otimes n}.\)
    Elements in the summands are denoted $v_1 \vert \cdots \vert v_n$, where $v_i\in V$.
\item Let $s^{-1}$ denote the desuspension functor on graded $\k$-modules where, for $V=\bigoplus_{i\in \mathbb{Z}} V_i$, we define $(s^{-1} V)_i=V_{i+1}$.
    Given a homogeneous element $v$ in $V$, we write $s^{-1} v$ for the element in $s^{-1}V$.


\end{enumerate}

\section{The two-sided cobar resolution}\label{Section: cobar}

We begin by first introducing some elementary coalgebraic notions. By a \emph{coalgebra} $(C, \Delta, \varepsilon)$ in $\chfo$ we mean a chain complex $C$ together with chain maps $\Delta\colon C\rightarrow C\otimes C$ and $\varepsilon\colon C\rightarrow \k$ such that it is coassociative, i.e. $(\id_C\otimes \Delta)\circ \Delta=(\Delta \otimes \id_C)\circ \Delta)$, and counital, i.e. $(\id_C\otimes \varepsilon)\circ \Delta = \id_C= (\varepsilon\otimes \id_C)\circ \Delta$. It is \emph{cocommutative} if in addition $\tau\circ \Delta=\Delta$ where $\tau$ is the symmetric isomorphism in the monoidal structure of $\chfo$. We say the coalgebra $C$ is \emph{simply connected} if $C_0=\k$ and $C_1=0$. 
We shall occasionally use the Sweedler notation for the comultiplication and write simply $c_{(1)}\otimes c_{(2)}$ for $\Delta(c)=\sum_{i} {c_{(1)}}_i\otimes {c_{(2)}}_i\in (C\otimes C)_n$ for any $c\in C_n$.

Given a coalgebra $(C, \Delta, \varepsilon)$, a \emph{right $C$-comodule} $(M, \rho)$ is a chain complex $M$ together with a chain map $\rho\colon M\rightarrow M\otimes C$ that is coassociative, i.e. $(\id_M \otimes \Delta) \circ \rho=(\rho \otimes \id_C)\circ \rho$, and counital, i.e. $(\id_M\otimes \varepsilon)\circ \rho=\id_M$.
A (right) $C$-colinear map $f\colon (M,\rho)\rightarrow (M', \rho')$ is a chain map $f\colon M\rightarrow M'$ such that $\rho'\circ f= (f\otimes \id_C)\circ \rho$. 
We denote by $\comod_C$ the induced category of right $C$-comodules. Left $C$-comodules are defined completely analogously.  If $C$ is cocommutative, then  left and right $C$-comodules  are equivalent and we will simply refer to them as $C$-comodules. We shall occasionally use the Sweedler notation for the coaction and write simply $x_{(0)}\otimes x_{(1)}$ for $\rho(x)=\sum_{i} {x_{(0)}}_i\otimes {x_{(1)}}_i\in (M\otimes C)_n$ for any $x\in M_n$.

One can view $\comod_C$ as the category of coalgebras over the comonad $-\otimes C\colon \chfo \rightarrow \chfo$. Thus by \cite[2.78, 2.j]{Adamek-Rosicky}, the category $\comod_C$ is locally presentable. 
Moreover, the forgetful functor $U\colon \comod_C\rightarrow \chfo$ is left adjoint to the cofree functor $-\otimes C\colon \chfo\rightarrow \comod_C$. Therefore colimits in $\comod_C$ are computed in $\chfo$. Since $C$ is automatically flat in $\chfo$, we get that the comonad $-\otimes C\colon \chfo\rightarrow \chfo$ preserves finite limits, and thus finite limits in $\comod_C$ are also determined in $\chfo$, and the category $\comod_C$ is abelian. 
If $\D$ is a small category (with infinitely many objects) and $F$ is a $\D$-diagram in $\comod_C$, we denote by $\lim^C_\D F$ the limit in $\comod_C$ and by $\lim_\D F$ the limit of $U\circ F$ in $\chfo$.

\begin{defi}
Let $(M, \rho)$ and $(N, \lambda)$ be respectively right and left comodules over a coalgebra $C$ in $\chfo$.
Define the \emph{cotensor product $M\square_C N$ of $M$ and $N$} to be the equalizer in $\chfo$: 
\[
\begin{tikzpicture}[baseline= (a).base]
\node[scale=1] (a) at (1,1){
\begin{tikzcd}
M\ccotens N \ar{r} & M\otimes N \ar[shift left]{r}{\rho\otimes \id_N}\ar[shift right]{r}[swap]{\id_M\otimes \lambda} & [1em] M\otimes C \otimes N.
\end{tikzcd}
};
\end{tikzpicture}
\]
If $C$ is cocommutative, as $-\otimes C\colon \chfo\rightarrow \chfo$ preserves equalizers since $C$ is automatically flat in $\chfo$, we get that $M\square_C N$ is a $C$-comodule via the coaction on $N$ (or equivalently on $M$):
\[
\begin{tikzpicture}[baseline= (a).base]
\node[scale=1] (a) at (1,1){
\begin{tikzcd}
M\ccotens N \ar[dashed]{d} \ar{r} & M\otimes N \ar{d} \ar[shift left]{r}\ar[shift right]{r} & M\otimes C \otimes N\ar{d}\\
(M \ccotens N) \otimes C \ar{r} & M\otimes N \otimes C \ar[shift left]{r}\ar[shift right]{r} & M\otimes C \otimes N \otimes C.
\end{tikzcd}
};
\end{tikzpicture}
\]In fact, it is elementary to check that the cotensor product extends to a symmetric monoidal structure on $\comod_C$ for which $C$ is the monoidal unit.
\end{defi}

There is a model structure on $\chfo$ in which weak equivalences are quasi-isomorphisms, cofibrations are monomorphisms, and fibrations are positive levelwise epimorphisms (\cite{quillen}). It is a combinatorial symmetric monoidal model category, see \cite{monmodSS}. Every object is cofibrant and fibrant.
A \emph{quasi-isomorphism of (right) $C$-comodules} is a $C$-colinear map such that it induces an isomorphism on the homologies as a chain map.

\begin{prop}\label{prop: comodules have a nice model cat}
Let $C$ be a coalgebra in $\chfo$.
Then the category of right comodules admits a combinatorial model structure left-induced from the forgetful-cofree adjunction
\[
\begin{tikzcd}[column sep= large]
U\colon\comod_C\ar[shift left=2]{r}{}[swap]{\perp} & \chfo\colon -\o C. \ar[shift left=2]{l}{}
\end{tikzcd}
\]
In particular, $U$ preserves and reflects cofibrations and weak equivalences. Therefore the weak equivalences in $\comod_C$ are precisely the quasi-isomorphisms of right $C$-comodules, and the cofibrations are precisely the monomorphisms of right $C$-comodules. Every object is cofibrant. 
\end{prop}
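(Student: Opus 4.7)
The plan is to apply the general machinery for left-induced model structures developed in \cite{left2}. By Proposition \ref{prop: comod are presentable}, $\comod_C(\chfo)$ is locally presentable and fits into an adjunction with $U$ as a left adjoint to the cofree functor $-\otimes C$; moreover, $\chfo$ with the model structure of Definition \ref{defi: model structures for fin prod of fields} is combinatorial. These are the hypotheses required to attempt left-inducing a model structure along $U$, declaring cofibrations and weak equivalences to be the preimages under $U$ of the cofibrations and weak equivalences of $\chfo$.

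The heart of the argument is verifying the acyclicity hypothesis of \cite{left2}: that every putative trivial cofibration is indeed a weak equivalence, or equivalently, that good path objects exist in $\comod_C(\chfo)$. Here the assumption that $\k$ is a finite product of fields is crucial, since it guarantees that every object of $\chfo$ is both cofibrant and fibrant. Combined with the simplicial enrichment of $\chfo$ via the Dold-Kan correspondence, this allows one to construct a functorial path object in $\comod_C(\chfo)$ by forming the $\Delta[1]$-cotensor in $\chfo$ and lifting it through $U$; the lift is legitimate because cotensors by finite simplicial sets are finite limits, and by Proposition \ref{prop: colimits and finite limites of comodules} such limits are computed in $\chfo$. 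I expect this acyclicity verification to be the main obstacle, since without the niceness of $\k$ one would be forced to deal with more subtle constructions of path objects inside comodules.

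Once the left-induced structure is in place, the remaining assertions are essentially formal. By construction, $U$ preserves and reflects cofibrations and weak equivalences, so these are precisely the monomorphisms and quasi-isomorphisms of $C$-comodules. Every object is cofibrant because $U$ reflects cofibrations and every object of $\chfo$ is cofibrant. Combinatoriality follows from local presentability of $\comod_C(\chfo)$ together with a set of generating (trivial) cofibrations produced by \cite{left2}, and the simplicial enrichment is supplied by Proposition \ref{prop: comodules are enriched in underlying model} applied to the simplicial structure of $\chfo$.
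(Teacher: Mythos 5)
Your overall plan --- left-induce along $U$ using the machinery of \cite{left2}, with presentability from Proposition \ref{prop: comod are presentable} and the formal consequences at the end --- matches the paper, which simply defers to \cite[6.3.7]{left2} and the cited combinatoriality results. The gap is in the step you yourself identify as the heart of the matter. For a \emph{left}-induced structure the acyclicity condition says that every map with the \emph{right} lifting property against all of $U^{-1}(\mathrm{Cof})$ is a weak equivalence, and the dual Quillen argument that establishes it requires every object to be cofibrant (true here) together with good \emph{cylinder} objects, not path objects: given such a map $f:X\rightarrow Y$, one first splits it using the cofibration $0\rightarrow Y$ and then produces a left homotopy $sf\simeq \id_X$ by lifting the cofibration $X\oplus X\rightarrowtail \mathrm{Cyl}(X)$ against $f$. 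Path objects are the tool for the dual, right-induced situation; they cannot be fed into a lifting square in which $f$ sits on the right.

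Moreover, the specific construction you propose would not produce a comodule in any case. The cotensor $X^{\Delta[1]}$ in $\chfo$ is $\Hom(N_*\Delta[1], X)$, which in degree $n$ is $X_n^{\oplus 2}\oplus X_{n+1}$; it is \emph{not} a finite limit of copies of the chain complex $X$, so Proposition \ref{prop: colimits and finite limites of comodules} does not let you lift it through $U$. Indeed $\Hom(M,-)$ is only lax monoidal and does not lift to $\comod_C(\chfo)$; the genuine cotensoring of $\comod_C(\chfo)$ over $\chfo$ alluded to in Definition \ref{def: hom and tens in comodules} is a different equalizer of cofree comodules whose underlying complex is not $\Hom(M,U(X))$. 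The correct move is to use the \emph{tensoring} instead: with $I=N_*\Delta[1]$, the object $X\otimes I$ is a $C$-comodule by Definition \ref{def: hom and tens in comodules}, the map $X\oplus X\rightarrow X\otimes I$ is a monomorphism of comodules (hence a cofibration) and $X\otimes I\rightarrow X$ is a quasi-isomorphism because $\k$ is a finite product of fields. Equivalently, one can argue directly that a map with the RLP against all monomorphisms is a split epimorphism whose kernel $K$ embeds into the acyclic comodule $K\otimes D^1$, hence splits off it and is acyclic. With either of these replacements for your path-object step, the rest of your argument goes through and agrees with the reference the paper invokes.
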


\begin{proof}
One can adapt the arguments from \cite[6.3.7]{left2} for unbounded chain complexes to $\chfo$. The model structure is combinatorial by \cite[2.23]{left1} combined with \cite[3.3.4]{left2}. 
\end{proof}

When $C$ is cocommutative, we will see that fibrant $C$-comodules correspond precisely to ``coflat" comodules, see Proposition \ref{prop: coflat=fibrant}.
We now introduce an explicit fibrant replacement in $\comod_C$. The monad $(-\otimes C) \circ U$ in $\comod_C$ induces a cosimplicial resolution for any object in $\comod_C$. Subsequently, we shall sometimes omit the functor $U$ for convenience.
This leads to the following definition.

\begin{defi}\label{def: cobar cosimplicial}
Let $(C, \Delta, \varepsilon)$ be a coalgebra in $\chfo$.
Let $(M, \rho)$ be a right $C$-comodule. The \emph{cosimplicial cobar construction} $\ccobars{M}$ of $M$ is the coaugmented cosimplicial object in $\comod_C$:
\[
\begin{tikzpicture}[baseline= (a).base]
\node[scale=1] (a) at (1,1){
\begin{tikzcd}
M\ar{r} & M\otimes C \ar[shift left]{r}\ar[shift right]{r} \ar[bend  right, dashed]{l} &  \ar[bend  right, dashed]{l} \ar{l} M\otimes C \otimes C \ar[shift left=2]{r}\ar{r} \ar[shift right=2]{r} & \ar[bend  right, dashed]{l} \ar[shift right]{l} \ar[shift left]{l} \cdots, 
\end{tikzcd}
};
\end{tikzpicture}
\]
defined as follows. For all $n\geq -1$: $\Omega^{n}(M, C, C)=M\otimes C^{\otimes n+1}$.
The zeroth coface map is given by $d^0=\rho\otimes \id_{C^{\otimes n+1}}\colon \Omega^{n}(M, C, C)\rightarrow \Omega^{n+1}(M, C, C)$.
 For $1\leq i \leq n+1$, the $i$-th coface map is given by: \[{ d^i= \id_M \otimes \id_{C^{\otimes i-1}} \otimes \Delta \otimes \id_{C^{\otimes n+1-i}}\colon \Omega^{n}(M, C, C) \rightarrow \Omega^{n+1}(M, C, C)}\]
The $j$-th codegeneracy map is given by:
\(
s^j=\id_M\otimes \id_{C^{\otimes j}} \otimes \varepsilon \otimes \id_{C^{\otimes n+1-j}}\colon \Omega^{n+1}(M, C, C) \rightarrow \Omega^{n}(M, C, C),
\)  for all $0\leq j\leq n$.
It is $U$-split coaugmented in the sense that there exist extra degeneracies:
\[
s^{-1}=\id_M\otimes \id_{C^{\otimes n+1}} \otimes \varepsilon\colon \Omega^{n+1}(M, C, C) \rightarrow \Omega^{n}(M, C, C),
\]
for all $n\geq -1$, which are dashed in the diagram above. These extra degeneracies are \emph{not} $C$-colinear but only chain maps. 
If $N$ is a left $C$-comodule, denote by $\Omega^\bullet(M, C, N)$ the cosimplicial object $\Omega^\bullet(M,C,C)\square_C N$ in $\chfo$ where we have extended $-\square_C N\colon \comod_C\rightarrow \chfo$ on the cosimplicial level, and we refer to it as the \emph{two-sided cosimplicial cobar construction}.
\end{defi}

As the cobar construction is $U$-split this means that for any right $C$-comodule $M$, we have that the natural chain map $M\rightarrow \holim_\Delta \Omega^\bullet(M,C, C)$ is a quasi-isomorphism. 
Notice in general that $\holim_\Delta^C \Omega^\bullet(M,C, C)$ is \emph{not} quasi-isomorphic to $\holim_\Delta \Omega^\bullet(M,C,C)$.
We show however that for $C$ simply connected, then we do obtain an equivalence.

\begin{defi}\label{def: double complex cobar}
When $C$ is simply connected, we obtain the coalgebra splitting $C=\k\oplus  \overline{C}$ where $\overline{C}$ is the kernel of the counit of $C$.
Conormalization provides an equivalence between cosimplicial objects and (non-negative) cochain complexes in $\chfo$, see \cite[8.4.3]{weibel}.
Denote $\overline{\Omega}^\bullet(M,C,N)$ the conormalization of $\Omega^\bullet(M,C,N)$. It is a cochain complex in $\chfo$ where \[\overline{\Omega}^n(M,C,N)=M\otimes \overline{C}^{\otimes n}\otimes N.\]
The resulting double complex $\overline{\Omega}^\bullet(M,C,N)_\bullet$ is bounded when $C$ is simply connected as $\overline{\Omega}^q(M,C,N)_p=0$ for $0\leq p \leq 2q-1$,
as $\overline{C}_0=\overline{C}_1=0$.
Therefore the direct-sum and direct-product totalizations of the double complex $\overline{\Omega}^\bullet(M,C,N)_\bullet$ are equal, let us denote $\Omega(M,C,N)$ its totalization.
Explicitly, $\Omega(M,C,N)$ is the chain complex
\(
(M\otimes T(s^{-1}\overline{C}) \otimes N, \delta),
\)
where up to Koszul sign, the differential $\delta$ is defined as
\[
\begin{tikzpicture}[baseline= (a).base]
\node[scale=0.85] (a) at (1,1){
$\delta(x \otimes s^{-1} c_1 \vert \cdots \vert s^{-1}c_n \otimes y)  =  dx \otimes s^{-1}c_1 \vert \cdots \vert s^{-1}c_n \otimes y 
 \pm x_{(0)}\otimes s^{-1} x_{(1)}\vert s^{-1} c_1 \vert \cdots \vert s^{-1} c_n \otimes y\pm x\otimes d_\Omega\Big(s^{-1} c_1 \vert \cdots \vert s^{-1} c_n\Big) \otimes y$
};
\node[scale=0.85] (b) at (1,0.5){$\pm x\otimes s^{-1}c_1 \vert \cdots \vert s^{-1}c_n \otimes dy
 \pm x\otimes s^{-1} c_1 \vert \cdots \vert s^{-1} c_n \vert s^{-1}y_{(1)} \otimes y_{(0)},$};
\end{tikzpicture}
\]
where $d$ denotes either the differential on $M$ or $N$, and $x\mapsto x_{(0)}\otimes x_{(1)}$ denotes the coaction of $M\rightarrow M\otimes C$ applied to an element $x\in M_i$, and $y\mapsto y_{(1)}\otimes y_{(0)}$ denotes the coaction of $N\rightarrow C\otimes N$ applied to an element $y\in N_j$. The differential $d_\Omega$ is defined as:
\[
\begin{tikzpicture}[baseline= (a).base]
\node[scale=0.90] (a) at (1,1){
$d_\Omega(s^{-1} c_1 \vert \cdots \vert s^{-1} c_n) = \sum_{j=1}^n \pm s^{-1}c_1 \vert \cdots \vert s^{-1}(dc_j)\vert \cdots \vert s^{-1}c_n
 + \sum_{j=1}^n \pm s^{-1}c_1 \vert \cdots \vert s^{-1} {c_{j}}_{(1)} \vert s^{-1}{c_j}_{(2)} \vert \cdots \vert s^{-1}c_n$
 };
\end{tikzpicture}
 \]
\end{defi}

When $N=C$, we can give a $C$-comodule structure on the chain complex $\Omega(M,C,C)$ induced by the graded cofree structure $x\otimes c\mapsto x\otimes c_{(1)}\otimes c_{(2)}$.

\begin{thm}\label{thm: fibrant replacement as cobar}
Let $(M, \rho)$ be a right comodule over a simply connected coalgebra $C$ in $\chfo$.
The coaction $\rho\colon M\rightarrow M\otimes C$ factors in $\comod_C$ as
\[
\begin{tikzpicture}[baseline= (a).base]
\node[scale=1] (a) at (1,1){
\begin{tikzcd}[column sep=small]
M \ar[hook]{rr}{\rho} \ar[hook]{dr}{\simeq}[swap]{j}& & M \otimes C\\
& \ccobar{M} \ar[two heads]{ur}[swap]{\widehat{\rho}}
\end{tikzcd}
 };
\end{tikzpicture}
\]
where $j$ is a trivial cofibration and $\widehat{\rho}$ is a fibration in $\comod_C$ specified by $j(x)=x_{(0)}\otimes 1 \otimes x_{(1)}$ and $\widehat{\rho}(x\otimes 1 \otimes c)=x\otimes c$, while $\widehat{\rho}(x\otimes s^{-1}c_1 \vert \dots \vert s^{-1}c_n \otimes c)=0$ for all $n\geq 1$. 
Here $x$ is a homogeneous element of $M$ and $c$ and $c_i$ are homogeneous elements of $C$.
Additionally, we have a quasi-isomorphism $\Omega(M,C,C)\simeq \holim_\Delta^C\Omega^\bullet(M,C,C)$.
Moreover, both $\Omega(M,C,C)$ and $M\otimes C$ are fibrant in $\comod_C$, hence $\Omega(M,C,C)$ is a fibrant replacement of $M$ in $\comod_C$.
\end{thm}

\begin{proof}
It is well-known that  $\holim_\Delta\Omega^\bullet(M,C,C)$ in $\chfo$ is quasi-isomorphic to the (direct-product) totalization of the double chain complex $\overline{\Omega}^\bullet(M,C, C)_\bullet$ (see for instance \cite[4.23]{ulrich}). It is elementary to check that the resulting quasi-isomorphism $M\rightarrow \holim_\Delta  \Omega^\bullet(M,C,C) \simeq \Omega(M,C,C)$ is precisely the map $j$, which is a monomorphism. 
It is also an elementary proof to show that $j$ and $\widehat{\rho}$ are $C$-colinear maps.
We are only left to show that $\widehat{\rho}$ is a fibration and that $\Omega(M,C,C)$ is the homotopy limit in $\comod_C$ of $\Omega^\bullet(M,C,C)$.

We shall omit the signs from the Koszul rule for simplicity.  Given a positive chain complex $(X, d)$, i.e. $X_0=0$, denote $\Path(X)=(X\oplus s^{-1}X, D)$ where $D(x)=d(x)+s^{-1}x$ and $D(s^{-1}x)=-s^{-1}d(x)$, as in \cite[1.5.1]{weibel}. 
Notice that if $N$ is a right $C$-comodule, then so is $\Path(N)$. Denote $e_N\colon \Path(N)\rightarrow N$ the projection, which is $C$-colinear.
Observe that if $f\colon M\rightarrow N$ is a $C$-colinear map between right $C$-comodules, where $N_0=0$, then the pullback of $f$ along $e_N$ is the $C$-comodule $M\oplus s^{-1} N$ with differential $D_f$ where $D_f(x)=dx+s^{-1}f(x)$ and $D_f(s^{-1}y)=-s^{-1}dy$.

We introduce now, for all $n\geq 0$ the positive chain complex $B(n)=s^{-n}(M\otimes \overline{C}^{\otimes n+1})$.
As the quotient $e_n\colon \Path(B(n))\rightarrow B(n)$ is a positive epimorphism, it is a fibration in $\chfo$. 
Therefore the induced cofree map $e_n^C\colon\Path(B(n))\otimes C\cong \Path(B(n)\otimes C)\rightarrow B(n)\otimes C$ is a fibration in $\comod_C$. 
Define the cofree $C$-comodule $E(0)=M\otimes C$ and define $f^{(1)}\colon E(0)\rightarrow B(0)\otimes C$ as $\rho \otimes \id_C + \id_M\otimes\Delta$ where we implicitly composed with the quotient $C\rightarrow \overline{C}$.
The map $f^{(1)}$ is $C$-colinear as it is the sum of colinear maps. Let $E(1)$ be the following pullback in $\comod_C$
\[
\begin{tikzpicture}[baseline= (a).base]
\node[scale=1] (a) at (1,1){
\begin{tikzcd}
E(1) \pull \ar{r} \ar[two heads]{d}[swap]{\widehat{\rho}^{(1)}} & \Path(B(0))\otimes C \ar[two heads]{d}{e_0^C}\\
E(0) \ar{r}[swap]{f^{(1)}} & B(0)\otimes C.
\end{tikzcd}
};
\end{tikzpicture}
\]
Explicitly $E(1)=(M\otimes C) \oplus s^{-1}(M\otimes \overline{C}\otimes C)$ with differential denoted $d^{(1)}_\Omega$ defined on $M\otimes C$ as
\[
d^{(1)}_\Omega=d\otimes \id_C + \id_M\otimes d+s^{-1}(\rho\otimes \id_C + \id_M\otimes \Delta)
\]
while on $M\otimes \overline{C}\otimes C$ as
\[
d^{(1)}_\Omega s^{-1}=-s^{-1}(d\otimes \id_{\overline{C}\otimes C} + \id_M\otimes d \otimes \id_C + \id_{M\otimes \overline{C}} \otimes d).
\]
The resulting map $\widehat{\rho}^{(1)}\colon E(1)\rightarrow E(0)$ is a fibration in $\comod_C$.

By induction, suppose we have the $C$-comodule $E(n)=(M\otimes C) \oplus \bigoplus_{k=1}^n s^{-k}(M\otimes \overline{C}^{\otimes k} \otimes C)$
for all $1\leq n < N$ for some $N>1$, with differential $d^{(n)}_\Omega$ defined so that on the pieces $M\otimes \overline{C}^{\otimes k} \otimes C$ where $0\leq k <n$ we have:
\[
\begin{tikzpicture}[baseline= (a).base]
\node[scale=1] (a) at (1,1){
$\displaystyle d^{(n)}_\Omega s^{-k}=(-1)^{k}s^{-k}\big( d\otimes \id_{\overline{C}^{\otimes k}\otimes C} + \sum_{i=0}^{k-1} \id_{M\otimes \overline{C}^{\otimes i}}\otimes d \otimes \id_{\overline{C}^{k-i-1}\otimes C}+ \id_{M\otimes \overline{C}^{\otimes k}}\otimes d\big)$
};
\node[scale=1] (b) at (1.6,0){
$\displaystyle + s^{-k-1}\big( 
\rho \otimes \id_{\overline{C}^{\otimes k}\otimes C} + \sum_{i=0}^{k-1}\id_{M\otimes \overline{C}^{\otimes i}}\otimes \Delta \otimes \id_{\overline{C}^{k-i-1}\otimes C}+ \id_{M\otimes \overline{C}^{\otimes k}}\otimes \Delta
\big),$};
\end{tikzpicture}
\]
while on the piece $M\otimes \overline{C}^{\otimes n}\otimes C$:
\[
\begin{tikzpicture}[baseline= (a).base]
\node[scale=1] (a) at (1,1){$\displaystyle
d^{(n)}_\Omega s^{-n}=(-1)^{n}s^{-n}\big( d\otimes \id_{\overline{C}^{\otimes n}\otimes C} + \sum_{i=0}^{n-1} \id_{M\otimes \overline{C}^{\otimes i}}\otimes d \otimes \id_{\overline{C}^{n-i-1}\otimes C}+ \id_{M\otimes \overline{C}^{\otimes n}}\otimes d\big).$
};
\end{tikzpicture}
\]
Define $f^{(N)}\colon E(N-1)\rightarrow B(N-1)\otimes C$ as trivial on all terms of the sum of $E(N-1)$ except on $M\otimes \overline{C}^{\otimes N-1}\otimes C$ for which is defined as:
\[
\begin{tikzpicture}[baseline= (a).base]
\node[scale=0.90] (a) at (1,1){
$f^{(N)}s^{-N+1}=s^{-N+1} (\rho\otimes \id_{\overline{C}^{N-1}}\otimes \id_C + \sum_{k=0}^{N-1} \id_M\otimes \id_{\overline{C}^{\otimes k}} \otimes \Delta \otimes \id_{\overline{C}^{\otimes N-k-2}}\otimes \id_C + \id_M\otimes \id_{\overline{C}^{\otimes N-1}} \otimes \Delta )$
};
\end{tikzpicture}
\]
for which we are again using implicitly the quotient $C\rightarrow \overline{C}$.
As before, we can check that $f^{(N)}$ is a chain map that is $C$-colinear.
Let $E(N)$ be the following pullback in $\comod_C$:
\[
\begin{tikzpicture}[baseline= (a).base]
\node[scale=1] (a) at (1,1){
\begin{tikzcd}
E(N) \pull \ar{r} \ar[two heads]{d}[swap]{\widehat{\rho}^{(N)}} & \Path(B(N-1))\otimes C \ar{d}{e_{N-1}^C} \\
E(N-1) \ar{r}[swap]{f^{(N)}} & B(N-1)\otimes C.
\end{tikzcd}
};
\end{tikzpicture}
\]
As above, the $C$-comodule is explicitly given by $E(N)=(M\otimes C) \oplus \bigoplus_{k=1}^N s^{-k}(M\otimes \overline{C}^{\otimes k} \otimes C)$ with differential $d^{(N)}_\Omega$ defined similarly as $d^{(n)}_{\Omega}$.

We have therefore constructed a tower of fibrations in $\comod_C$:
\[
\begin{tikzcd}
\dots \ar[two heads]{r} & E(n) \ar[two heads]{r}{\widehat{\rho}^{(n)}} & E(n-1) \ar{r} & \dots  \ar{r} & E(1) \ar[two heads]{r}{\widehat{\rho}^{(1)}} & E(0)=M\otimes C.
\end{tikzcd}
\]
The composition of the tower $\lim_n^C E(n)\rightarrow M\otimes C$ is a fibration in $\comod_C$, and we argue now that it is precisely the map $\widehat{\rho}$.
For this we notice that $\lim_n^C E(n)\cong \lim_n E(n)$ even though it is an infinite limit. 
Indeed, we can give a right $C$-comodule structure on $\lim_n E(n)$ as we have:
\[
U(E(n))\cong (M\otimes C) \oplus \bigoplus_{k=1}^n M\otimes (s^{-1}\overline{C})^{\otimes k} \otimes C.
\]
Notice that the maps in the tower induce $E(n)_i\cong E(n+1)_i \cong \dots$ as $(M\otimes (s^{-1}\overline{C})^{\otimes n} \otimes C)_i=0$ for all $0\leq i< n$ since $(s^{-1} \overline{C})_0=0$ as $C$ is simply connected.
Therefore the coaction on $\lim_n E(n)$ is a sequence of maps $(\lim_n E(n))_i\rightarrow ((\lim_n E(n)) \otimes C)_i$ that are entirely determined by the coactions $E(n)_i\rightarrow (E(n)\otimes C)_i$ for $n$ large enough.  It is easy to then verify that the natural maps $\lim_n E(n)\rightarrow E(n)$ are $C$-colinear therefore enforcing that $\lim_n^C E(n)\cong \lim_n E(n)$. Thus $\Omega(M,C,C)\cong \lim^C_n E(n)\simeq \holim^C_\Delta \Omega^\bullet(M,C,C)$ and $\widehat{\rho}$ must be a fibration.
\end{proof}

\section{Rigidifying the coaction}\label{Section: rigidifying}

We first make the notion of homotopy coherent coassociative comodules precise using an $\infty$-categorical approach. 
Let $\Cinf$ be a symmetric monoidal $\infty$-category, as in \cite[2.0.0.7]{lurie1}. Let $\mathbb{A}_\infty$ be the associative $\infty$-operad as in \cite[4.1.1.6]{lurie1}. 
Let $A$ be an $\mathbb{A}_\infty$-algebra in $\Cinf$, as in \cite[4.1.1.6]{lurie1}.  We denote $\mathcal{Mod}_A(\Cinf)$ the $\infty$-category of right $A$-modules, instead of $\mathcal{RMod}_A(\Cinf)$ as in \cite[4.2.1.13]{lurie1}. 
By an \emph{$\mathbb{A}_\infty$-coalgebra} in $\Cinf$ we mean an $\mathbb{A}_\infty$-algebra in the opposite category $\Cinf\op$. 
If $C$ is an $\mathbb{A}_\infty$-coalgebra in $\Cinf$, we define the category of right $C$-comodules in $\Cinf$ as:
\(
\comodinf_C(\Cinf):= {\left(\modinf_C(\Cinf\op) \right)}\op.
\)

Given a symmetric monoidal model category $\M$, any coalgebra $C$ in $\M$ that is cofibrant as an object in $\M$ can be regarded as an $\mathbb{A}_\infty$-coalgebra in the underlying $\infty$-category $\N(\M_c)[\W^{-1}]$ of $\M$. See more details in \cite{coalginDK}. Essentially, a strictly coassociative coalgebra is naturally coassociative up to coherent homotopies.  Let $\M^\otimes$ be the operator category of $\M$.  Let $C$ be a coalgebra in $\M$. It can be regarded as an $\mathbb{A}_\infty$-coalgebra in $\N(\M)$ and there is an equivalence of $\infty$-categories:
\(
\N(\comod_C(\M)) \simeq \comodinf_C(\N(\M)).
\)
Suppose now that $\M$ is a combinatorial symmetric monoidal model category in which each object is cofibrant.
By \cite[4.1.7.6]{lurie1}, the underlying $\infty$-category $\N(\M)\left[ \W^{-1} \right]$ is symmetric monoidal via the derived tensor product of $\M$. Moreover, the localization functor $\N(\M^\otimes) \rightarrow \N(\M)[ \W^{-1} ]^\otimes$ is symmetric monoidal. In particular, we obtain a functor:
\(
\comodinf_C(\N(\M)) \longrightarrow \comodinf_C \left(\N(\M)\left[ \W^{-1} \right]\right).
\)
By construction, the functor sends weak equivalences to equivalences, and thus by the universal property of the localization, we obtain a functor:
\[
\begin{tikzcd}
\gamma_C:\N(\comod_C(\M)) \left[ \W_C^{-1} \right] \ar{r} & \comodinf_{{C}}\left(\N(\M) \left[ \W^{-1} \right]\right).
\end{tikzcd}
\]
Here $\W_C$ denotes the class of weak equivalences between right $C$-comodules in $\M$. 

Objects in $\comodinf_{{C}}\left(\N(\M) \left[ \W^{-1} \right]\right)$ are $C$-comodules in $\M$ with a coaction that is coassociative only up to higher homotopies. Objects in $\N(\comod_C(\M)) \left[ \W_C^{-1} \right]$ are $C$-comodules in $\M$ with a coaction that is strictly coassociative. We have just shown that comodules with a strictly coassociative coaction are naturally endowed with a coaction that is coassociative up to coherent homotopies.

If the functor $\gamma_C$ above is an equivalence of $\infty$-categories, then this shows that any $C$-comodule in $\M$ with a homotopy coherent coassociative coaction is weakly equivalent to a $C$-comodule with a strictly coassociative coaction. We say that we can \emph{rigidify the coaction}. 

We apply our above discussion to $\M=\chfo$, the category of non-negative chain complexes over $\k$.  By \cite[7.1.3.10]{lurie1}, the underlying $\infty$-category $\N(\chfo)[\W^{-1}]$ is equivalent as a symmetric monoidal $\infty$-category to connective $H\k$-modules in spectra, which we denote $\DDk$. Here $H\k$ denotes the Eilenberg-Mac Lane spectrum of $\k$. In this situation, we show we can always rigidify the coaction of a comodule.

\begin{thm}\label{main thm}
If $C$ is a simply connected coalgebra in $\chfo$, then there is a natural equivalence of $\infty$-categories:
\[
\begin{tikzcd}
\gamma_C:\N\left( \comod_C(\chfo)\right) \left[ \W^{-1}_C \right]\ar{r}{\simeq} & \comodinf_C\left(\Dinf^{\geq 0}(\k)\right),
\end{tikzcd}
\]
where $\W_C$ denotes the class of $C$-colinear chain maps that are quasi-isomorphisms in $\chfo$.
\end{thm}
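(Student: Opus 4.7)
The plan is to apply the $\infty$-categorical comonadic Barr-Beck theorem, dual to Lurie's monadicity theorem in \cite[Chapter 4]{lurie1}, to both sides of $\gamma_C$, showing each is equivalent to the $\infty$-category of coalgebras over the common comonad $T := -\otimes^L C$ on $\DDk$. Both forgetful functors
\[
\mathcal{U} : \N\left( \comod_C(\chfo)\right) \left[ \W^{-1}_\comod \right] \longrightarrow \DDk, \qquad \mathcal{U}_\infty : \comodinf_C(\DDk) \longrightarrow \DDk
\]
fit into adjunctions with the cofree comodule functor $-\otimes^L C$, and proving that each is comonadic with the same underlying comonad immediately yields the desired equivalence; the naturality of the construction then identifies this equivalence with $\gamma_C$.

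On the $\infty$-categorical side, the comonadicity of $\mathcal{U}_\infty$ is essentially formal from the definition $\comodinf_C(\DDk) = {\left(\modinf_C(\DDk\op)\right)}\op$ of Definition \ref{defi: comodules in inf cat} together with the standard monadicity statement for $\ai$-module $\infty$-categories in \cite{lurie1}. On the model-categorical side, the Quillen adjunction $U \dashv (-\otimes C)$ of Proposition \ref{prop: comodules have a nice model cat} descends to the underlying $\infty$-categories. Because every object of $\chfo$ is already cofibrant and fibrant, the derived cofree functor is computed on underlying chain complexes by $-\otimes C$ itself, so the induced $\infty$-categorical comonad is naturally equivalent to $T$.

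The technical heart of the proof is verifying the hypotheses of comonadic Barr-Beck for $\mathcal{U}$. Conservativity is immediate from the definition of $\W_\comod$, since a map of $C$-comodules is declared to be a weak equivalence exactly when its underlying chain map is a quasi-isomorphism. The real work is to show that $\mathcal{U}$ preserves totalizations of $\mathcal{U}$-split cosimplicial objects. I would approach this by replacing a cosimplicial comodule $X^\bullet$ levelwise by its Postnikov tower $\{X^\bullet(n)\}_n$ from Theorem \ref{thm: Postnikov tower of Comodules}. By Corollary \ref{cor: stab in each deg induces same limit} each such tower stabilizes in every chain degree and $U$ commutes with its limit; moreover, at each Postnikov level $X^\bullet(n)$ is assembled from $X^\bullet(n-1)$ by pullbacks against cofree comodules on Eilenberg-Mac Lane objects, and Proposition \ref{prop: colimits and finite limites of comodules} guarantees that $U$ preserves these pullbacks. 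Combining this with the fact that $-\otimes C$ is exact (the finite product of fields hypothesis) and interchanging the two limits via the stabilization in Lemma \ref{lem: cofree preserves stabilization of towers} should produce the required compatibility of $U$ with the totalization.

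The main obstacle will be the careful interchange of the cosimplicial totalization with the sequential Postnikov limit, along with the verification that a $\mathcal{U}$-splitting of $X^\bullet$ suffices to control the totalization of its Postnikov replacement layer by layer. The simply connected hypothesis on $C$ is essential here: without it, Theorem \ref{thm: Postnikov tower of Comodules} has no analogue and all control over fibrant replacements collapses, as emphasized in the introduction. Once the preservation of split totalizations is in hand, comonadic Barr-Beck produces on both sides an equivalence with coalgebras over $T$, and the naturality of the construction of $\gamma_C$ identifies it with this common equivalence.
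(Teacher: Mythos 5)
Your overall strategy is the paper's: apply the Barr--Beck--Lurie comonadicity theorem (Theorem \ref{thm: lurie comonad}) to both forgetful functors, observe that the $\infty$-categorical side is comonadic by definition (Proposition \ref{prop: derived comodules are comonadic by def}), that conservativity on the model-categorical side is immediate, and that the Postnikov tower together with the simply connected hypothesis is what makes the remaining limit computation possible. All of that matches.

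There is, however, a genuine gap in your plan for the key step. You propose to verify preservation of $\mathcal{U}$-split totalizations directly, for an \emph{arbitrary} split cosimplicial comodule $X^\bullet$, by ``replacing $X^\bullet$ levelwise by its Postnikov tower $\{X^\bullet(n)\}_n$.'' This is not available: the Postnikov tower of Theorem \ref{thm: Postnikov tower of Comodules} is not functorial (the paper remarks on this explicitly after Definition \ref{def: postnikov tower of comodules}), so a cosimplicial comodule does not acquire a cosimplicial diagram of Postnikov towers, and the double limit you want to interchange is not even defined. A generic functorial fibrant replacement would restore functoriality but destroy the layer-by-layer cofree structure your argument needs. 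The missing idea is the reduction recorded in Proposition \ref{prop: gijs condition}: for a conservative left adjoint $L$ with right adjoint $R$, comonadicity is equivalent to the single condition that $X \to \mathcal{lim}_\Delta RL^{\bullet+1}(X)$ is an equivalence for every object $X$ --- that is, one only needs to handle the \emph{canonical} two-sided cobar resolution $\ccobars{X}$, never an arbitrary split cosimplicial object. With that reduction in hand, the Postnikov tower is applied to the single comodule $X$, and the cobar construction $\ccobars{-}$ --- which \emph{is} functorial and carries the pullbacks and stabilizing limits defining the tower to homotopy pullbacks and homotopy limits --- is pushed through the tower layer by layer, starting from the split cofree case (Lemma \ref{lem: canonical split resolution for cofree}) and concluding with Lemma \ref{lem: canonical res ofr comod works}. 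Your argument becomes correct once you insert this reduction; without it, the levelwise Postnikov replacement step fails.
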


\begin{proof}
We shall use a (dual) version of the Barr-Beck-Lurie monadicity theorem \cite[4.7.3.16]{lurie1}. More precisely, we show that the forgetful functors $\N( \comod_C(\chfo)) [ \W^{-1}_C ]\rightarrow \N(\chfo)[\W^{-1}]\simeq \Dinf^{\geq 0}(\k)$ and $\comodinf_C\left(\Dinf^{\geq 0}(\k)\right)\rightarrow \Dinf^{\geq 0}(\k)$ are both comonadic over $\Dinf^{\geq 0}(\k)$ via the comonad $-\otimes C\colon \Dinf^{\geq 0}(\k)\rightarrow \Dinf^{\geq 0}(\k)$. 
By comonadic, we mean the dual version of monadic as in \cite[4.7.3.4]{lurie1}.
Since $C$ is automatically cofibrant in $\chfo$, we have that the functor $-\otimes C\colon \chfo \rightarrow \chfo$ is already left derived and thus $X\otimes C$ is equivalent to the derived tensor product of $X$ and $C$ in $\chfo$ for any chain complex $X$.
The forgetful functor $\comodinf_C\left(\Dinf^{\geq 0}(\k)\right)\rightarrow \Dinf^{\geq 0}(\k)$ is comonadic by the dual version of \cite[4.7.2.5]{lurie1}.

We are only left to check that the functor $\N( \comod_C) [ \W^{-1}_C ]\rightarrow \Dinf^{\geq 0}(\k)$ is comonadic. 
By \cite[4.7.3.5]{lurie1}, this is equivalent to show that it is conservative and preserves split coaugmented cosimplicial (in the dual sense of \cite[4.7.2.2]{lurie1}).
Since every object is cofibrant in $\comod_C$, the forgetful functor $U\colon \comod_C\rightarrow \chfo$ is already left derived. 
Since $U$ preserves and reflects weak equivalences by definition of the model structures, we obtain that its left derived version $\N( \comod_C) [ \W^{-1}_C ]\rightarrow \Dinf^{\geq 0}(\k)$ is conservative.
By \cite[1.3.4.16, 1.3.4.23, 1.3.4.25]{lurie1}, since $\comod_C$ is a combinatorial model category, homotopy limits of cosimplicial diagrams in $\comod_C$ are equivalent to limits in the $\infty$-category $\N( (\comod_C)_f) [ \W^{-1}_C ]$.
So let $M^{-1}\rightarrow M^{\bullet}$ be a $U$-split cosimplicial diagram of fibrant objects in $\comod_C$ and let us show that $M^{-1}$ is the homotopy limit $\holim_\Delta^C(M^\bullet)$ in $\comod_C$. The natural map $M^{-1}\rightarrow \holim_\Delta^C(M^\bullet)$ fits in a commutative diagram in $\comod_C$
\[
 \begin{tikzcd}
 M^{-1} \ar{r} \ar{d} & \holim_\Delta^C (M^\bullet) \ar{d} \\
 \holim_\Delta^C \Omega^\bullet(M^{-1}, C, C) \ar{r} & \holim_{\Delta \times \Delta}^C \Omega^\bullet(M^\bullet, C, C).
 \end{tikzcd}
 \]
The left  vertical map is a quasi-isomorphisms by Theorem \ref{thm: fibrant replacement as cobar}. The right vertical map is also a quasi-isomorphism by \cite[19.4.2]{hir} as it is a map of cosimplicial diagrams of fibrant objects (using that both $M^n$ and $\Omega(M^n,C,C)$ are fibrant by Theorem \ref{thm: fibrant replacement as cobar})  which is a quasi-isomorphism levelwise.
The bottom horizontal map is a quasi-isomorphism as $M^{-1}\rightarrow M^\bullet$ is $U$-split. Indeed, we have that $U(M^{-1})\rightarrow \holim_\Delta M^\bullet$ is a quasi-isomorphism in $\chfo$.
Since the cofree functor $-\otimes C\colon\chfo\rightarrow \comod_C$ preserves limits and quasi-isomorphisms, we obtain a quasi-isomorphism $U(M^{-1})\otimes C \rightarrow  \holim_\Delta(M^\bullet)\otimes C\simeq \holim_\Delta^C (U(M^\bullet)\otimes C)$ and $U(M^{-1})\otimes C \rightarrow U(M^\bullet)\otimes C$ remains $U$-split. 
Reiterating this argument shows that the bottom horizontal map is a quasi-isomorphism in $\comod_C$, thus we can conclude that the top horizontal arrow is a quasi-isomorphism.
\end{proof} 

\section{The derived cotensor product}\label{Section: derived cotensor}

We assume $C$ is a \emph{cocommutative} simply connected coalgebra in $\chfo$ throughout this section. The cotensor product over $C$ results in a symmetric monoidal structure $(\comod_C, \square_C, C)$. 
We wish to induce a similar symmetric monoidal structure on the $\infty$-category of $\comodinf_C(\Dinf^{\geq 0}(\k))$. We cannot reproduce the techniques on modules as in \cite[4.5.2.1]{lurie1} as the derived tensor product in $\DDk$ does not commute with totalizations. Instead, we use our previous rigidification result to derive the cotensor product on $\comod_C$. 

Given a symmetric monoidal model category $(\M, \otimes)$, we can Quillen left derive the tensor product $\otimes^\mathbb{L}$ to obtain a symmetric monoidal structure on the homotopy category of $\M$ (see \cite[4.3.2]{hovey}). It also endows its underlying $\infty$-category $\N(\M_c)\left[ \W^{-1}\right]$ with a symmetric monoidal structure (see \cite[4.1.7.6]{lurie1}). In the case of modules over a commutative algebra $A$, the derived tensor product $M\otimes^\mathbb{L} N$ of $A$-modules $M$ and $N$ is given by a two-sided bar construction $B(M, A, N)$ (see \cite[4.4.2.8]{lurie1}).  However, we come across an obstacle: the monoidal structure in $\comod_C$ is not a monoidal model structure. The bifunctor $-\ccotens - : \comod_C\times \comod_C \rightarrow \comod_C $ is not a Quillen bifunctor in the sense of \cite[4.2.1]{hovey}. It is not even part of a closed monoidal structure. 
It is not either a ``co-Quillen bifunctor" as $M\ccotens-:\comod_C\rightarrow \comod_C$ does not preserve fibrations, even if $M$ is fibrant.
Nevertheless, we can still derive the cotensor product using the following general method in $\infty$-categories from \cite[4.1.7.4]{lurie1}.

\begin{prop}
Let $(\M,\otimes, \I)$ be a symmetric monoidal category endowed with a model structure. Denote by $\M_f$ the full subcategory spanned by fibrant objects.
Suppose 
\begin{enumerate}[label=\upshape\textbf{(\roman*)}]
\item for any fibrant object $M$ in $\M$, the functors $M\otimes -:\M\rightarrow \M$ and $-\otimes M:\M\rightarrow \M$ preserve fibrant objects and weak equivalences between fibrant objects;
\item\label{item: comonoidal model cat} the monoidal unit $\I$ is fibrant.
\end{enumerate}
Then the Dwyer-Kan localization $\N(\M_f)[{\W}^{-1}]$ of $\M$ can be given the structure of a symmetric monoidal $\infty$-category via the symmetric monoidal Dwyer-Kan localization:
\(
\N(\M_f^\o) \rightarrow \N(\M_f)[{\W}^{-1}]^\o,
\)
where ${\W}$ is the class of weak equivalences restricted to fibrant objects in $\M$.
\end{prop}

Subsequently, we will show that the cotensor product of fibrant $C$-comodules is a fibrant $C$-comodule (Corollary \ref{cor: cotensor of fibrant is fibrant}) and that $M\square_C-\colon \comod_C\rightarrow \comod_C$ preserves quasi-isomorphisms if $M$ is fibrant (Corollary \ref{cor: coflat preserves weak equivalences}). 
This provides a derived cotensor product $\dcotens$ on $\comod_C$ and thus on $\N(\comod_C)[{\W}^{-1}_C]$ using fibrant replacement.
In fact, we can use the fibrant replacement $M\stackrel{\simeq}\hookrightarrow \Omega(M,C,C)$ of Theorem \ref{thm: fibrant replacement as cobar} to show that if $N$ is a fibrant $C$-comodule,  then $M\dcotens N\simeq \Omega(M,C,C)\square_C N$.
Although $-\square_C N\colon \comod_C\rightarrow \comod_C$ does not preserve infinite homotopy limits, it does preserves direct-sum totalization of double chain complex and thus we still obtain $\Omega(M,C,C)\square_C N\simeq \Omega(M,C,N)$.
Combining the results, we obtain the following.

\begin{thm}\label{thm: derived cotensor product}
The $\infty$-category $\comodinf_C(\Dinf^{\geq 0}(\k))$ of homotopy coherent comodules in $\Dinf^{\geq 0}(\k)$  over a simply connected cocommutative coalgebra $C$ in $\chfo$ is symmetric monoidal with respect to the derived cotensor product $\dcotens$ defined by $M\dcotens N\simeq \Omega(M,C,N)$ if $M$ and $N$ are $C$-comodules.
\end{thm}

\subsection{Coflat Comodules}

We show here, in Proposition \ref{prop: coflat=fibrant}, that fibrant $C$-comodules in the model category $\comod_C$ are precisely the \emph{coflat $C$-comodules}. In particular, this will show that the cotensor product of fibrant $C$-comodules remains fibrant (see Corollary \ref{cor: cotensor of fibrant is fibrant}).

If $M$ is a $C$-comodule, then the induced functor $M\square_C-\colon \comod_C\rightarrow \comod_C$ is left exact, preserves finite limits and filtered colimits. This is simply due to the fact that $M$ is automatically flat in $\chfo$ and thus $M\otimes -\colon \chfo\rightarrow \chfo$ preserves finite limits and monomorphisms, and that equalizers in $\chfo$ commutes with filtered colimits.
We say $M$ is a \emph{coflat }$C$-comodule if $M\square_C-\colon \comod_C\rightarrow \comod_C$ is (right) exact.

\begin{prop}
Let $M$ and $N$ be coflat $C$-comodules. Then $M\ccotens N$ is a coflat $C$-comodule.
\end{prop}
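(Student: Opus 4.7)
The plan is to exploit the natural associativity isomorphism of the cotensor product established earlier in this section. Writing $F_W := W \ccotens -: \comod_C \to \comod_C$ for any $C$-comodule $W$, associativity gives a natural isomorphism of functors
\[
F_{X \ccotens Y} \;\cong\; F_X \circ F_Y,
\]
where naturality in the variable $Z$ is built into the definition of the associator (it comes from naturality of equalizers and of the associator for $\otimes$ in $\chfo$). Thus to show $X \ccotens Y$ is coflat, it suffices to show $F_X \circ F_Y$ is exact.

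Now $F_Y$ is exact because $Y$ is coflat, and $F_X$ is exact because $X$ is coflat. The composition of two exact functors (in the sense of preserving finite limits \emph{and} finite colimits, equivalently preserving short exact sequences by Proposition \ref{prop: colimits and finite limites of comodules}) is again exact. Combined with the left exactness already supplied by Proposition \ref{prop: limits and colimits with cotensor product}, this gives exactness of $F_{X \ccotens Y}$, which is precisely coflatness of $X \ccotens Y$.

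There is essentially no obstacle here; the only subtle point is to be sure that the associativity isomorphism is natural in the third variable (so that the isomorphism of functors $F_{X\ccotens Y} \cong F_X \circ F_Y$ really holds as functors and not just pointwise), but this is immediate from the construction of the associator via the universal property of equalizers. The statement would fail in a generic monoidal setting where the cotensor of comodules is not even a comodule, but here $\k$ being a finite product of fields ensures $-\otimes M$ preserves finite limits, which is what powered both the construction of $\ccotens$ on comodules and its associativity, and hence powers this corollary.
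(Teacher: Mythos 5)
Your proof is correct and is essentially the paper's own argument: apply $F_Y$ then $F_X$ to a short exact sequence, using coflatness of each, and identify the result with $F_{X\ccotens Y}$ via the (natural) associativity isomorphism. The paper writes this out as a chain of short exact sequences rather than as an isomorphism of functors, but the content, including the implicit reliance on naturality of the associator, is the same.
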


\begin{proof}
This follows from the fact that composition of right exact functors is a right exact functor and that $(M\square_C N) \square_C P\cong M \square_C (N\square_C P)$ for any $C$-comodule $P$.
\end{proof}

We will show below in Proposition \ref{prop: coflat=fibrant} that a $C$-comodule is fibrant if and only if it is coflat. Thus the above results immediately gives the following.

\begin{cor}\label{cor: cotensor of fibrant is fibrant}
Let $M$ and $N$ be fibrant $C$-comodules. Then $M\ccotens N$ is fibrant.
\end{cor}

We first need some preliminary results of homological algebra and model categories. The following observation is useful to identify certain fibrations in $\comod_C$.

\begin{lem}[Bousfield]\label{lem: Bousfield}
Let $\A$ be an abelian category endowed with a model structure where acyclic cofibrations are precisely monomorphisms with acyclic cokernels. Let $f\colon X\rightarrow Y$ be an epimorphism in $\A$. Let $F$ be its kernel. 
Then $f$ is a fibration if and only if $F$ is fibrant.
\end{lem}

\begin{proof}
A fibration always has fibrant kernel, regardless of being an epimorphism. 
This is because pullbacks preserve fibrations and the kernel $F$ is given by the pullback:
\[
\begin{tikzcd}
F \pull \ar[two heads]{d} \ar{r} & X \ar[two heads]{d}{f}\\
0 \ar{r} & Y.
\end{tikzcd}
\]
Now suppose $F$ is fibrant, let us show that $f$ is a fibration. Since $\A$ is a model category, we can factor $f=f'\circ i$, where $i$ is an acyclic cofibration and $f'$ is a fibration. Denote $X'$ the target of $i$ and $F'$ the kernel of $f'$.
We obtain the following morphism of short exact sequences in $\A$:
\[
\begin{tikzpicture}[baseline= (a).base]
\node[scale=1] (a) at (1,1){
\begin{tikzcd}
0 \ar{r} & F \ar{d} \ar{r} & X \ar{r}{f}\ar[hook]{d}{i}[swap]{\simeq} & Y \ar[equals]{d} \ar{r} & 0\\
0 \ar{r} & F' \ar{r} & X'\ar{r}{f'} & Y \ar{r} & 0.
\end{tikzcd}
};
\end{tikzpicture}
\]
We have used the fact that since $f$ is an epimorphism and $f=f'\circ i$, then $f'$ must also be an epimorphism.
Since $i$ is a monomorphism, the snake lemma guarantees that the induced map $F\rightarrow F'$ is also a monomorphism. 
Therefore we can take the cokernels of the vertical maps:
\[
\begin{tikzpicture}[baseline= (a).base]
\node[scale=1] (a) at (1,1){
\begin{tikzcd}
& 0\ar{d} & 0\ar{d} & 0 \ar{d} & \\
0 \ar{r} & F \ar[hook]{d} \ar{r} & X \ar{r}{f}\ar[hook]{d}{i}[swap]{\simeq} & Y \ar[equals]{d} \ar{r} & 0\\
0 \ar{r} & F' \ar{r} \ar{d} & X'\ar{r}{f'} \ar{d} & Y \ar{r}\ar{d} & 0\\
0 \ar{r} & K\ar{d} \ar{r} & K'\ar{d} \ar{r} & 0\ar{d}\ar{r} & 0.\\
& 0 & 0 & 0
\end{tikzcd}
};
\end{tikzpicture}
\]
The $9$-lemma guarantees that the third row is exact, and thus $K$ is acyclic. Therefore $F\rightarrow F'$ is an acyclic cofibration. 
Since $F$ is fibrant, then there is a section $\ell\colon F'\rightarrow F$.
We define $P$ to be the pushout of $\ell$ along $F'\hookrightarrow X'$ in $\A$.
In an abelian category, pushouts preserve monomorphisms so $F\rightarrow P$ is a monomorphism. Pushouts also preserve cokernels, thus $Y$ is the cokernel of $F\rightarrow P$. 
Therefore we obtain the following composite of short exact sequences:
\[
\begin{tikzpicture}[baseline= (a).base]
\node[scale=1] (a) at (1,1){
\begin{tikzcd}
0 \ar{r} & F \ar[hook]{d}[swap]{\simeq} \ar{r} & X \ar{r}{f}\ar[hook]{d}{i}[swap]{\simeq} & Y \ar[equals]{d} \ar{r} & 0\\
0 \ar{r} & F'\ar{d}{\ell} \ar{r} & X'\ar{r}{f'} \ar{d} & Y \ar{r}  \ar[equals]{d}& 0\\
0 \ar{r} & F \ar{r} & P \ar{r} & Y \ar{r} & 0.
\end{tikzcd}
};
\end{tikzpicture}
\]
The composite of the left vertical arrows is the identity on $F$ by construction of $\ell$. By the $5$-lemma, we get that $P$ is isomorphic to $X$. Therefore, we have just shown that $f$ is a retract of $f'$ which is a fibration. Hence $f$ is also a fibration.
\end{proof} 

 Let $M$ be a $C$-comodule. Define \(\cotor^i(M,-)\colon\comod_C\rightarrow \comod_C\) to be the $i$-th right derived functor of $M\ccotens -$, for $i\geq 0$, in the abelian categorical sense.
We have that $\cotor^0(M,N)=M\ccotens N$ for any comodules $M$ and $N$. If $N$ is an injective $C$-comodule, then $\cotor^i(M,N)=0$ for any comodule $M$ and $i>0$. It is immediate to verify the following.

\begin{prop}\label{prop: coflat equivalent conditions}
Let $M$ be a $C$-comodule. 
The following are equivalent:
\begin{enumerate}[label=\upshape\textbf{(\roman*)}]
\item the $C$-comodule $M$ is coflat;
\item the functor $M\ccotens-\colon \comod_C\rightarrow \comod_C$ preserves all colimits;
\item the functor $M\ccotens-\colon\comod_C\rightarrow \comod_C$ is a left adjoint;
\item for any $C$-comodule $N$, we have $\cotor^1(M, N)=0$.
\end{enumerate}
\end{prop}

The category $\comod_C$ is $\k$-linear, the hom set of $C$-colinear maps $M\rightarrow N$ can be endowed with a $\k$-module structure which we denote $\Hom_C(M,N)$. Equivalently, these are the $0$-cycles of the subchain complex of the internal hom $\underline{\Hom}(M,N)\in \chfo$ obtained as the kernel of $f\mapsto \rho'\circ f - (f\otimes \id_C)\circ \rho$, for $\rho$ and $\rho'$ the coactions of $M$ and $N$ respectively.
As usual, given $C$-comodules $M$ and $N$, we have that any short exact sequence in $\comod_C$
\[
\begin{tikzcd}
0\ar{r} & N \ar{r} & E \ar{r} & M \ar{r} & 0
\end{tikzcd}
\]
splits if and only if $\Ext^i(M,N)=0$.
\begin{prop}\label{prop: coflat=fibrant}
A $C$-comodule is coflat if and only if it is fibrant in the model category $\comod_C$.
\end{prop}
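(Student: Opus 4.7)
The plan is to use the Postnikov tower $\{X(n)\}$ of Theorem \ref{thm: Postnikov tower of Comodules} as the bridge between coflatness and fibrancy, handling the two implications separately.

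For the forward direction (fibrant $\Rightarrow$ coflat), I would first show the Postnikov fibrant replacement $\tilde X = \lim^C_n X(n)$ is coflat, and then deduce the general case by realizing a fibrant $X$ as a retract of $\tilde X$. Each cofree comodule $M \otimes C$ is coflat by Lemma \ref{lem: cotensor of cofree formula}, since $(M \otimes C) \ccotens Y \cong M \otimes Y$ and $M \otimes -$ is exact over the finite product of fields $\k$. An induction up the tower then shows every $X(n)$ is coflat: the pullback defining $X(n)$ yields a short exact sequence $0 \to S^{n-1}(V_n) \otimes C \to X(n) \to X(n-1) \to 0$ whose outer terms are coflat by induction, so cotensoring with a short exact sequence $0 \to Y' \to Y \to Y'' \to 0$ in the second variable produces a $3 \times 3$ diagram whose middle row is forced exact by the $3 \times 3$-lemma. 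Coflatness passes from the tower to $\tilde X$ because the tower stabilizes in each degree (Lemma \ref{lem: cofree preserves stabilization of towers}): in each degree $i$, the cotensor $\tilde X \ccotens Y$ coincides with $(X(N) \ccotens Y)_i$ for $N$ sufficiently large, so exactness in each degree transfers. Finally, since $X$ is fibrant, the acyclic cofibration $X \hookrightarrow \tilde X$ admits a retraction by lifting $\id_X$ along $X \to 0$, and coflatness is stable under retracts.

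For the backward direction (coflat $\Rightarrow$ fibrant), the strategy is to show that $X$ has the right lifting property against acyclic cofibrations. Given an acyclic monomorphism $A \hookrightarrow B$ of $C$-comodules and a map $A \to X$, the pushout $P = B \cup_A X$ fits into a short exact sequence $0 \to X \to P \to Q \to 0$ in which $Q = B/A$ is an acyclic $C$-comodule. The lifting problem reduces to splitting this extension, equivalently to verifying $\mathrm{Ext}^1_{\comod_C}(Q, X) = 0$ for every acyclic $Q$ and every coflat $X$. I would argue this vanishing using the cobar resolution $Q \to \Omega^\bullet(Q, C, C)$: coflatness of $X$, together with the formula $X \ccotens (V \otimes C) \cong X \otimes V$ from Lemma \ref{lem: cotensor of cofree formula}, makes $X \ccotens -$ compatible with the cobar tower degree-by-degree, and the acyclicity of $Q$ then forces the obstruction class to vanish, producing a retraction $P \to X$.

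The main obstacle is precisely this backward direction: one must translate an abelian exactness condition (coflatness) into a model-theoretic lifting condition (fibrancy). In the discrete case over a field, the link passes through the classical equivalence of coflatness with injectivity in the abelian sense, but in the dg setting the abelian injectives are more restrictive and this equivalence must instead be recovered through the Postnikov and cobar machinery developed in the appendix, most naturally via the Bousfield-type lemma acknowledged at the beginning of the paper. The forward direction, by contrast, is routine once the $3 \times 3$-lemma is combined with the pointwise stabilization property of Postnikov towers.
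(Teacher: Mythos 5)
Your forward direction (fibrant $\Rightarrow$ coflat) is correct but follows a genuinely different route from the paper. The paper never inducts up the Postnikov tower here; it instead runs a chain of equivalences
\[
\text{coflat} \;\Leftrightarrow\; \cotor^1(S^0,F)=0 \;\Leftrightarrow\; \Ext^1(D^n,F)=0 \;\Leftrightarrow\; \Ext^1(X,F)=0 \text{ for all acyclic } X \;\Leftrightarrow\; \text{fibrant},
\]
and extracts fibrant $\Rightarrow$ coflat from that chain using Lemma \ref{lem: Bousfield} (an extension of a fibrant comodule by an acyclic one is a fibration, hence splits). Your alternative --- cofree comodules are coflat by Lemma \ref{lem: cotensor of cofree formula}, the nine lemma propagates coflatness through the layers $0\to S^{n-1}(V_n)\otimes C\to X(n)\to X(n-1)\to 0$ (where you should also record that this sequence stays exact after applying $-\ccotens Y$, which holds because the sub-object is cofree), degreewise stabilization passes coflatness to $\wt{X}$, and retracts preserve exact functors --- is sound and arguably more geometric; it buys a proof of this implication that never mentions $\Ext$.

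The genuine gap is in the backward direction. Your reduction of fibrancy to the vanishing $\Ext^1(Q,X)=0$ for every acyclic $Q$ and coflat $X$ is fine (and is a mild variant of the paper's, which splits the single extension $0\to F\to E\to X\to 0$ coming from a fibrant replacement $E$). But the derivation of that vanishing from coflatness is exactly the content you have not supplied: ``coflatness makes $X\ccotens-$ compatible with the cobar tower and acyclicity forces the obstruction class to vanish'' is an assertion, not an argument. The difficulty is that coflatness constrains the derived functors of $X\ccotens-$, while the obstruction lives in $\Ext^1(Q,X)$, a derived functor of $\Homc(-,X)$; these are a priori unrelated. The paper bridges them in two steps absent from your sketch: first, the degreewise identification $\Hom(D^n,M)_n=M_n=(S^0\otimes M)_n$, applied to an injective embedding $X\hookrightarrow I$ with cokernel $K$, shows that $\cotor^1(S^0,X)=0$ if and only if $\Ext^1(D^n,X)=0$ for all $n$; second, the decomposition of an acyclic complex as a product of disks (Proposition \ref{prop: split in fields}) lets one filter $Q$ by sub-comodules built from disks and bootstrap $\Ext^1(D^{n}(W),X)=0$ up to $\Ext^1(Q,X)=0$ by induction and passage to the limit. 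Note also that Lemma \ref{lem: Bousfield}, which you name as the natural tool for this direction, is in fact used by the paper only for the \emph{opposite} implication (fibrant $\Rightarrow$ the $\Ext^1$ vanishing); it plays no role in coflat $\Rightarrow$ fibrant. As written, the proposal therefore establishes only one half of the proposition.
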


\begin{proof}
As $C$ is simply connected, the split $C=\k\oplus \overline{C}$ gives $\k$ a $C$-comodule structure. Thus any chain complex $X$ can be given a trivial $C$-comodule structure $X\cong X\o \k \longrightarrow X\o C$.
For $n\geq 0$, let $S^n$ be the chain complex $\k$ concentrated in degree $n$ and zero elsewhere. 
For $n\geq 1$, let $D^n$ be the acyclic  chain complex that is $\k$ in degrees $n-1$ and $n$ with identity as differential, and zero elsewhere.
We give $S^n$ and $D^n$ the trivial $C$-comodule structure.
Let $F$ be a $C$-comodule. We show that the following statements are equivalent.
\begin{enumerate}[label=\upshape\textbf{(\roman*)}]
\item\label{item: coflat=fibrant 1} $F$ is a coflat $C$-comodule.
\item\label{item: coflat=fibrant 2} $\cotor^1(S^n, F)=0$,  for all $n\geq 0$.
\item\label{item: coflat=fibrant 3} $\cotor^1(S^0, F) =0$.
\item\label{item: coflat=fibrant 4} $\mathsf{Ext}_C^1(D^n, F)=0$,  for all $n\geq 1$.
\item\label{item: coflat=fibrant 5} $\mathsf{Ext}^1_C(M, F)=0$ for any acyclic $C$-comodule $M$.
\item\label{item: coflat=fibrant 6} $F$ is a fibrant $C$-comodule.
\end{enumerate}

\ref{item: coflat=fibrant 1} $\Leftrightarrow$ \ref{item: coflat=fibrant 2} 
We only need to show \ref{item: coflat=fibrant 2} $\Rightarrow$ \ref{item: coflat=fibrant 1}.
Suppose $\cotor^1(S^n, F)=0$ for all $n\geq 0$.
As cotensor product preserves coproducts, then $\cotor^1(\bigoplus_\lambda S^n, F)=0$ for all $n\geq 0$ and any ordinal $\lambda$. 
If $V$ is $\k$-module, it is a retract of free $\k$-module as it is automatically projective, and thus if we view $V$ as a chain complex concentrated in degree zero, then $S^n\otimes V$ is a retract of $\bigoplus_\lambda S^n$ for some $\lambda$.
Thus $\cotor^1(S^n\otimes V, F)=0$.

We now introduce a notation. For any  chain complex $M$, and for $n\geq 0$, define $M_{\leq n}$ as the brutal truncation of $M$ \cite[1.2.7]{weibel} where $(M_{\leq n})_i =M_i$ for $i\leq n$ and zero otherwise.
As $(M_{\leq n} \otimes C)_i=\left(M\otimes C_{}\right)_i$ for $0\leq i\leq n$, we obtain an unique $C$-comodule structure on $M_{\leq n}$ from a $C$-comodule structure on $M$ such that the inclusion $M_{\leq n}\hookrightarrow M$ is $C$-colinear.
In particular, any $C$-comodule $M$ is the filtered colimit $\colim_n M_{\leq n}$ in $\comod_C$.

Let us prove that $\cotor^1(M,F)=0$ for any $C$-comodule $M$.
We first prove by induction on $n\geq 0$ that $\cotor^1(M_{\leq n}, F)=0$. 
For the initial case, notice that $M_{\leq 0}\cong S^0\otimes V$ for some $\k$-module $V$, and thus $\cotor^1(M_{\leq 0}, F)=0$ by our above argument.
Now suppose $\cotor^1(M_{\leq n}, F)=0$ for some $n\geq 0$. Then we obtain a short exact sequence of $C$-comodules:
\[
\begin{tikzcd}
0\ar{r} & M_{\leq n} \ar{r} & M_{\leq n+1} \ar{r} & S^{n+1}\otimes V\ar{r} & 0,
\end{tikzcd}
\]
for some $\k$-module $V$.
The induced long exact sequence from right deriving the cotensor $-\ccotens F$ in particular gives the exact sequence:
\[
\begin{tikzcd}[column sep= small]
\cotor^1(M_{\leq n}, F) \ar{r} & \cotor^1(M_{\leq n+1}, F) \ar{r} & \cotor^1(S^{n+1}\otimes V, F).
\end{tikzcd}
\]
Thus by induction, and our above argument, we get $\cotor^1(M_{\leq n+1}, F)=0$.
As cotensor product commutes with filtered colimits, we get:
\(
\cotor^1(M, F) \cong \colim_n \cotor^1(M_{\leq n}, F),
\)
and hence we can conclude $\cotor^1(M, F)=0$, for all $C$-comodules $M$. Thus $F$ is coflat.

\ref{item: coflat=fibrant 2} $\Leftrightarrow$ \ref{item: coflat=fibrant 3}
We only need to show that \ref{item: coflat=fibrant 3} $\Rightarrow$ \ref{item: coflat=fibrant 2}. For all $n\geq 0$, notice that:
\(
S^n\ccotens F \cong S^n \otimes (S^0 \ccotens F).
\)
Since the functor $S^n\otimes-:\comod_C\rightarrow \comod_C$ is exact, the result follows.

\ref{item: coflat=fibrant 3} $\Leftrightarrow$ \ref{item: coflat=fibrant 4}
For any $C$-colinear map $I\rightarrow K$ the induced chain map $\Homc(D^n, I)\rightarrow \Homc( D^n, K)$ is an epimorphism for all $n\geq 1$ if and only if $S^0\ccotens I\rightarrow S^0 \ccotens K$ is an epimorphism of $C$-comodules.
Indeed, this follows from the isomorphism
\(
\Hom_C(D^n, M)\cong (S^0 \square_C M)_{n}
\)
for any $C$-comodule $M$.

Now, let $I$ be an injective $C$-comodule such that $F$ maps into $I$. Then we can form the short exact sequence of $C$-comodules:
\[
\begin{tikzcd}
0 \ar{r} & F \ar{r} & I \ar{r} & K \ar{r} & 0.
\end{tikzcd}
\]
Thus $\cotor^1(S^0, F)=0$ if and only if $S^0\ccotens I \rightarrow S^0 \ccotens K$ is an epimorphism, and $\Ext^1(D^n, F)=0$ if and only if $\Homc(D^n, I)\rightarrow \Homc( D^n, K)$ is an epimorphism for all $n\geq 1$. We can conclude by our above argument.

\ref{item: coflat=fibrant 4} $\Leftrightarrow$ \ref{item: coflat=fibrant 5}
We only need to show that \ref{item: coflat=fibrant 4} $\Rightarrow$ \ref{item: coflat=fibrant 5}.
Notice that $\Ext^1(D^n, F)=0$ for all $n\geq 1$, implies that $\Ext^1(D^n\otimes V, F)=0$ for any $\k$-module $V$ and all $n\geq 1$.
Indeed, since $V$ is a retract of a free $\k$-module, we only need to show that $\Ext^1(\bigoplus_\lambda D^n, F)=0$ for some ordinal $\lambda$. But this follows from:
\[
\Homc\left(\bigoplus_\lambda D^n, F\right)\cong\prod_\lambda \Homc (D^n, F).
\]
If $M$ is an acyclic chain complex, and $n\geq 1$, let $M_{<n}$ be the good truncation of $M$ \cite[1.2.7]{weibel}, where  $(M_{<n})_i=M_i$ for $0\leq i <n$, while $(M_{<n})_n$ is a choice of $\k$-module $V_n$ in $M_n$ that is isomorphic to $M_n/Z_n(M)$, and $(M_{<n})_i=0$ otherwise.
Given an acyclic $C$-comodule $M$, there is a unique $C$-comodule structure on $M_{<n}$ such that $M_{<n}\hookrightarrow M$ is $C$-colinear.
Indeed, we need to see that the coaction $\rho\colon M\rightarrow M\otimes C$ restricts to $\rho_{<n}\colon M_{<n}\rightarrow M_{<n}\otimes C$. 
For $i<n$, as $(M_{<n}\otimes C)_i=(M\otimes C)_i$, we see directly that $\rho$ restricts to $M_{<n}$ on the $i$-th level. We now need to verify that $\rho_n\colon M_n\rightarrow (M\otimes C)_n$ sends $V_n=(M_{<n})_n$ to $(M_{<n}\otimes C)_n$.
Notice first that:
\[
(M_{<n}\otimes C)_n=(V_n\otimes \k) \oplus \bigoplus_{\substack{a+b=n \\ a>0}} M_a\otimes C_b \, \subseteq \, (M_n\otimes \k)\oplus \bigoplus_{\substack{a+b=n \\ a>0}} M_a\otimes C_b = (M\otimes C)_n
\]
But by counitality of the coaction, we know that if $m\in M_n$, then the value of $\rho_n(m)$ on the summand $M_n\otimes \k$ in $(M\otimes C)_n$ must be $m\otimes 1$. Thus if $m\in V_n$, then $\rho_n(m)\in (M_{<n}\otimes C)_n$. This shows $M_{<n}$ is a subcomodule of $M$. 
Therefore $M$ is the filtered colimit $\colim_n M_{<n}$ in $\comod_C$.

Let us now show that $\Ext^1(M, F)=0$ for any acyclic $C$-comodule $M$. 
We first prove by induction on $n\geq 1$ that $\Ext^1(M_{<n}, F)=0$.
For the initial case, notice that $M_{<1}\cong D^1\otimes V$ for some $\k$-module $V$ and thus $\Ext^1(M_{<1}, F)=0$ by our above argument.
Now suppose $\Ext^1(M_{<n}, F)=0$ for some $n\geq 1$. 
Then we obtain a short exact sequence of $C$-comodules for some $\k$-module $V$:
\[
\begin{tikzcd}[column sep=small]
0 \ar{r} & M_{<n} \ar{r} & M_{< n+1} \ar{r} & D^{n+1}\otimes V \ar{r} & 0.
\end{tikzcd}
\]
The induced long exact sequence from left deriving the functor $\Homc(-,F)$ in particular gives the exact sequence:
\[
\begin{tikzcd}[column sep= small]
\Ext^1(D^{n+1}\otimes V, F)\ar{r} & \Ext^1(M_{< n+1}, F) \ar{r} & \Ext^1(M_{< n}, F)
\end{tikzcd}
\]
Thus by induction, and our above argument, we get $\Ext^1(M_{< n+1}, F)=0$. 

The tower $\{\Hom_C(M_{n+1}, F)\rightarrow \Hom_C(M_{<n}, F)\}$ satisfies the Mittag-Leffler conditions \cite[3.5.6]{weibel}, hence
$\Ext^1(M, F)\cong \lim_n \Ext^1(M_{<n}, F)$ as in \cite[3.5.10]{weibel}.
Therefore we can conclude $\Ext^1(M, F)=0$ for any acyclic $C$-comodule $M$.

\ref{item: coflat=fibrant 5} $\Leftrightarrow$ \ref{item: coflat=fibrant 6}
Let us first show \ref{item: coflat=fibrant 5} $\Rightarrow$ \ref{item: coflat=fibrant 6}. 
Suppose $\Ext^1(M, F)=0$ for any acyclic $C$-comodule $M$.
Factor the map $F\rightarrow 0$ in $\comod_C$ as
\[
\begin{tikzcd}
F \ar{rr} \ar[hook]{dr}[swap]{\simeq} & & 0\\
& E\ar[two heads]{ur} & 
\end{tikzcd}
\]
such that $E$ is fibrant.  Let $K$ be the cokernel of $F\hookrightarrow E$. Then we obtain a short exact sequence in $\comod_C$
\[
\begin{tikzcd}
0 \ar{r} & F \ar{r}{\simeq} & E \ar{r} & K \ar{r} & 0.
\end{tikzcd}
\]
Thus $K$ is an acyclic $C$-comodule. But since $\Ext^1(K, F)=0$, then the short exact sequence must split.
Thus $F$ is a retract of $E$, and hence $F$ is fibrant.

Let us show now \ref{item: coflat=fibrant 6} $\Rightarrow$ \ref{item: coflat=fibrant 5}.
Suppose we are given any extension of $F$ with an acyclic $C$-comodule $M$:
\[
\begin{tikzcd}
0 \ar{r} & F \ar{r} & E \ar{r} & M \ar{r} & 0.
\end{tikzcd}
\]
Then by Lemma \ref{lem: Bousfield}, we get that the map $E\rightarrow M$ must be a fibration of $C$-comodules.
But the lifting property provides the dashed map in the following commutative diagram in $\comod_C$
\[
\begin{tikzcd}
0 \ar[hook]{d}[swap]{\simeq} \ar{r} & E \ar[two heads]{d}\\
M \ar[equals]{r} \ar[dashed]{ur} & M.
\end{tikzcd}
\]
Thus the extension must split. 
Hence $\Ext^1(M, F)=0$. \qedhere
\end{proof}

\subsection{An Eilenberg-Moore Spectral Sequence} We show here that if $M$ is a fibrant $C$-comodule (i.e. coflat), then the functor $M\ccotens-:\comod_C\rightarrow \comod_C$ preserves quasi-isomorphisms. We shall use classical methods from \cite{EMSS}.
As noticed in \cite[A1.2.12]{ravenel}, it is useful to observe that for $i\geq 0$ and all $C$-comodules $M$ and $N$, we have an isomorphism of $C$-comodules:
\(\cotor^i(M, N)\cong H^i(\overline{\Omega}^\bullet({M}, C, {N})).\)
Recall that to any chain complex $X$, we can regard its homology $H_*(X)$ as a chain complex with trivial differentials. As $C$ is a simply connected coalgebra, then $H_*(C)$ is also a simply connected coalgebra (with trivial differentials). Moreover, for any $C$-comodule $M$, we can check that $H_*(M)$ is a $H_*(C)$-comodule.

\begin{thm}[Eilenberg-Moore Spectral Sequence]\label{thm: EMSS}
Let $M$ be a fibrant $C$-comodule, i.e. a coflat $C$-comodule.
Let $N$ be any $C$-comodule.
Then there is a convergent spectral sequence:  
\[E^2_{\bullet,q}= \mathsf{CoTor}^q_{H_*(C)}(H_*(M), H_*(N)) \Rightarrow E^\infty_{\bullet, 0}=H_*(M\ccotens N).
\] 
\end{thm}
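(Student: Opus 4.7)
The plan is to construct the spectral sequence as the column spectral sequence of the first-quadrant bicomplex $K^{p,q}=(X\otimes\underline{C}^{\otimes p}\otimes Y)_q$ obtained from the two-sided conormalized cobar $\ncobar{\bullet}{X}{Y}$ (Definition \ref{def: cobar cochain}). Its horizontal (cohomological) differential is the alternating sum of the cobar cofaces, and its vertical (homological) differential is the internal chain differential. Because $C$ is simply connected, $\underline{C}_0=\underline{C}_1=0$, so $K^{p,q}=0$ unless $q\geq 2p$; thus in each total homological degree $n=q-p$ only the columns $0\leq p\leq n$ contribute, the totalization $T_*$ is a finite sum in each degree, and the column filtration $F^s T=\bigoplus_{p\geq s}K^{p,\bullet}$ is degreewise bounded. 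The resulting spectral sequence therefore converges strongly.

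Next I need to identify $T_*$ with $X\ccotens Y$ up to quasi-isomorphism. The left analogue of Lemma \ref{lem: canonical res ofr comod works} produces a weak equivalence $Y\stackrel{\simeq}\to \cobar{C}{Y}$ between fibrant left $C$-comodules. Since $X$ is coflat, hence fibrant by Proposition \ref{prop: coflat=fibrant}, the functor $X\ccotens-$ preserves this weak equivalence by Corollary \ref{cor: coflat preserves weak equivalences}, so $X\ccotens Y\simeq X\ccotens \cobar{C}{Y}$. Applying Lemma \ref{lem: cotensor of cofree formula} levelwise collapses $X\ccotens(C\otimes M)\cong X\otimes M$, which identifies the cosimplicial $C$-comodule $X\ccotens \cobars{C}{Y}$ with $\cobars{X}{Y}$ termwise. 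Simple connectivity of $C$ ensures that the cobar tower stabilizes in each chain degree (as in Lemma \ref{lem: cofree preserves stabilization of towers}), so the totalization reduces to a finite limit in each degree, which is preserved by the left-exact functor $X\ccotens-$ (Proposition \ref{prop: limits and colimits with cotensor product}). The totalization is thereby interchanged with $X\ccotens-$, identifying $\cobar{X}{Y}\simeq T_*$ and hence $H_*(T_*)\cong H_*(X\ccotens Y)$.

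Finally, the spectral sequence pages are computed by standard Künneth. Since $\k$ is a finite product of fields, every $\k$-module is flat, so $E_1^{p,q}=H_q(X\otimes\underline{C}^{\otimes p}\otimes Y)=(H_*(X)\otimes\underline{H_*(C)}^{\otimes p}\otimes H_*(Y))_q$, with induced horizontal differential the cobar differential for the dg-coalgebra $H_*(C)$ and its comodules $H_*(X),H_*(Y)$. Taking horizontal cohomology and applying Remark \ref{rem: cotor computed by cobar} yields $E_2^{p,q}=\cotor^p(H_*(X),H_*(Y))_q$, the desired $E^2$-page abutting to $H_*(X\ccotens Y)$. The main obstacle is the interchange in the middle step: totalization need not commute with $X\ccotens-$ in general, and it is precisely coflatness of $X$ combined with simple connectivity of $C$ that makes the relevant homotopy limit a finite limit preserved by $X\ccotens-$.
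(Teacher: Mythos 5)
Your double complex is the right one, and your computation of the column (second) spectral sequence --- vanishing region $K^{p,q}=0$ for $q<2p$ from simple connectivity, K\"unneth over $\k$ to get $E^1$, then the cobar differential for $H_*(C)$ to get $E^2=\mathsf{CoTor}_{H_*(C)}(H_*(X),H_*(Y))$ --- matches the paper. The problem is the identification of the abutment. Your middle step deduces $X\ccotens Y\simeq X\ccotens\cobar{C}{Y}$ by applying Corollary \ref{cor: coflat preserves weak equivalences} to the weak equivalence $Y\to\cobar{C}{Y}$. But that corollary is itself \emph{proved from} Theorem \ref{thm: EMSS} (its proof literally invokes the spectral sequence you are constructing), so your argument is circular. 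Coflatness of $X$ has to do real work somewhere, and in your write-up it only enters through this illegitimate appeal.

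The non-circular route --- and the one the paper takes --- is to use the \emph{other} spectral sequence of the same double complex to compute the homology of the total complex, rather than trying to identify the total complex with $X\ccotens Y$ homotopically. Taking cohomology of the rows first gives $E^1_{\bullet,q}=H^q(\ncobar{\bullet}{X}{Y})\cong\cotor^q(X,Y)$ by Remark \ref{rem: cotor computed by cobar}; coflatness of $X$ says precisely that $\cotor^q(X,Y)=0$ for $q\geq 1$ (Proposition \ref{prop: coflat equivalent conditions}), so this spectral sequence collapses onto $E^1_{\bullet,0}=X\ccotens Y$ and identifies the homology of the total complex with $H_*(X\ccotens Y)$. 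Since both spectral sequences converge (by the same vanishing region you established), the column spectral sequence abuts to the same target, which completes the proof. This replaces your homotopy-limit interchange entirely and also sidesteps the secondary issue of identifying the homotopy limit of Definition \ref{def: cobar cochain} computed as in the simplicial model structure with the totalization of the conormalized bicomplex, which your proposal asserts without justification.
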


\begin{proof}
As previously observed, the second quadrant double chain complex $\overline{\Omega}^\bullet(M,C,N)_\bullet$ of Definition \ref{def: double complex cobar} is bounded since $C$ is simply connected.
Thus the two associated spectral sequences to the double complex converge, see \cite[2.15]{mccleary}.
The first spectral sequence has its $E^1$-page induced by the cohomology of the columns, and therefore:
\(
E^1_{\bullet,q}=H^q(\overline{\Omega}^\bullet(M,C,N))\cong \cotor^q(M,N).
\)
Since $M$ is a coflat $C$-comodule, then $E^1_{\bullet,q}=0$ for all $q\geq 1$, and we have $E^1_{\bullet, 0}= M\ccotens N$. Thus the spectral sequence collapses onto its second page $E^2_{\bullet, 0}=H_*( M\ccotens N)$.
The second spectral sequence has its $E^1$-page induced by the homology of the rows, and therefore:
\(
E^1_{\bullet,q}=H_*(\overline{\Omega}^q(M,C,N)))=\overline{\Omega}^q(H_*(M), H_*(C), H_*(N)).
\)
Thus, as its $E^2$-page is given by the cohomology of the induced cochain complex, we obtain:
\(
E^2_{\bullet, q}= \mathsf{CoTor}^q_{H_*(C)}(H_*(M), H_*(N)).
\)
It converges to the page with trivial rows except its $0$-th column which is given by the homology $H_*(M\ccotens N)$.
\end{proof}

\begin{cor}\label{cor: coflat preserves weak equivalences}
Let $C$ be a simply connected coalgebra in $\chfo$.
Let $M$ be a fibrant $C$-comodule. (i.e. a coflat $C$-comodule). Then the functor $M\ccotens-:\comod_C\rightarrow \comod_C$ preserves quasi-isomorphisms.
\end{cor}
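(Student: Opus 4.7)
The plan is to deduce this immediately from the Eilenberg-Moore spectral sequence of Theorem \ref{thm: EMSS}. Let $f: Y \to Y'$ be a weak equivalence of $C$-comodules, i.e.\ a quasi-isomorphism. I want to show that $\id_X \ccotens f : X\ccotens Y \to X\ccotens Y'$ is a quasi-isomorphism as well.

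First I would observe that the Eilenberg-Moore spectral sequence constructed in Theorem \ref{thm: EMSS} is natural in the second variable $Y$. Indeed, its construction goes through the two-sided conormalized cobar double complex $(\ncobar{\bullet}{X}{Y})_\bullet = (X\otimes \underline{C}^{\otimes\bullet} \otimes Y)_\bullet$, which is manifestly functorial in $Y$ through its last tensor factor, and both the row-filtration and column-filtration spectral sequences inherit this functoriality. The map $f$ therefore induces a morphism of convergent spectral sequences from the EMSS computing $H_*(X\ccotens Y)$ to the one computing $H_*(X\ccotens Y')$.

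Next I would compare the $E^2$-pages. Since $f$ is a quasi-isomorphism of chain complexes, $H_*(f): H_*(Y) \to H_*(Y')$ is an isomorphism, and it is visibly a morphism of $H_*(C)$-comodules. Because $\mathsf{CoTor}^q_{H_*(C)}(H_*(X), -)$ is a functor on $H_*(C)$-comodules, $H_*(f)$ induces an isomorphism
\[
\mathsf{CoTor}^q_{H_*(C)}(H_*(X), H_*(Y)) \stackrel{\cong}\longrightarrow \mathsf{CoTor}^q_{H_*(C)}(H_*(X), H_*(Y')),
\]
for every $q \geq 0$. Thus the map of spectral sequences is an isomorphism on $E^2$-pages.

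By the comparison theorem for convergent spectral sequences (the bigradings are concentrated in the second quadrant with the vanishing range $(\ncobar{q}{X}{Y})_p=0$ for $0\leq p\leq 2q-1$ established in the proof of Theorem \ref{thm: EMSS}, so both sequences converge strongly), the induced map on $E^\infty$-pages is an isomorphism, hence $H_*(X\ccotens f) : H_*(X\ccotens Y) \to H_*(X\ccotens Y')$ is an isomorphism. Therefore $\id_X \ccotens f$ is a quasi-isomorphism of $C$-comodules, which is what it means for $X\ccotens -$ to preserve weak equivalences. The main conceptual work has already been carried out in Theorem \ref{thm: EMSS} and Proposition \ref{prop: coflat=fibrant}; the only nontrivial point here is the naturality of the spectral sequence, which is immediate from its construction via the cobar bicomplex.
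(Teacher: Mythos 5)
Your proof is correct and follows essentially the same route as the paper: the paper likewise deduces the corollary from the naturality of the Eilenberg--Moore spectral sequence of Theorem \ref{thm: EMSS}, observing that a quasi-isomorphism $Y\to Y'$ induces an isomorphism of $H_*(C)$-comodules $H_*(Y)\cong H_*(Y')$, hence an isomorphism of $E^2$-pages, hence an isomorphism $H_*(X\ccotens Y)\cong H_*(X\ccotens Y')$. Your write-up simply spells out the naturality and convergence points that the paper leaves implicit.
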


\begin{proof}
Let $Y\stackrel{\simeq}\longrightarrow Y'$ be a quasi-isomorphism of $C$-comodules. 
It induces an isomorphism $H_*(Y)\cong H_*(Y')$ of $H_*(C)$-comodules.
Therefore we obtain:
\[
\mathsf{CoTor}^q_{H_*(C)}(H_*(X), H_*(Y)) \cong \mathsf{CoTor}^q_{H_*(C)}(H_*(X), H_*(Y')),
\]
for all $q\geq 0$. By Theorem \ref{thm: EMSS}, we obtain $H_*(X\ccotens Y)\cong H_*(X\ccotens Y')$ via the map $Y\rightarrow Y'$. Then $X\ccotens Y \stackrel{\simeq}\longrightarrow X \ccotens Y'$ is a quasi-isomorphism of $C$-comodules.
\end{proof}

\subsection{Change of Coalgebras}
We observe here a direct consequence from Corollary \ref{cor: coflat preserves weak equivalences}.
Let $f\colon C\rightarrow D$ be a map of simply connected cocommutative coalgebras in $\chfo$. The map endows the coalgebra $C$ with a right $D$-comodule structure:
\(
\begin{tikzcd}
C\ar{r}{\Delta_C} & C\o C \ar{r}{\id_C \o f} & C\o D,
\end{tikzcd}
\)
such that $f\colon C\rightarrow D$ is a map of $D$-comodules.
We obtain a functor \(f^*\colon\comod_C\rightarrow \comod_D,\) where each  $C$-comodule $(M, \rho)$ is sent to the $D$-comodule $(M, (\id_M \o f)\circ \rho)$. We shall often write $f^*(M)$ simply as $M$.

Given any $D$-comodule $M$, we can form the cotensor of $D$-comodules $M\cotens_D C$, which can be endowed with the structure of $C$-comodule $M \cotens_D C\rightarrow M\square_D (C\otimes C)\cong (M\square_D C)\otimes C$  via the comultiplication on $C$. 
Therefore, we obtain a functor $-\cotens_D C\colon \comod_D\rightarrow \comod_C$ which is right adjoint to $f^*$. 

\begin{prop}\label{prop: change of coalgebras}
Let $f\colon C\rightarrow D$ be a map of simply connected cocommutative coalgebras in $\chfo$.
Then the adjunction
\[
\begin{tikzcd}
 \comod_C \ar[shift left=2]{r}{f^*}[swap]{\perp} & \comod_D \ar[shift left=2]{l}{-\cotens_D C}
\end{tikzcd}
\]
is a Quillen pair. The adjunction is a Quillen equivalence if and only if the map $f$ is a quasi-isomorphism.
Moreover, when $f$ is a quasi-isomorphism, we obtain an equivalence of symmetric monoidal $\infty$-categories
\(
\comodinf_C(\Dinf^{\geq 0}(\k)) \simeq \comodinf_D(\Dinf^{\geq 0}(\k)),
\)
with respect to their derived cotensor product.
\end{prop}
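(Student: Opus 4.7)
The plan is to establish the Quillen pair, then handle both directions of the Quillen equivalence, and finally upgrade to a symmetric monoidal equivalence using the Eilenberg-Moore spectral sequence of Theorem \ref{thm: EMSS}.

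First I would observe that $f^*$ is the identity on underlying chain complexes, so since cofibrations and weak equivalences in both $\comod_C$ and $\comod_D$ are precisely the monomorphisms and quasi-isomorphisms detected in $\chfo$ by Proposition \ref{prop: comodules have a nice model cat}, $f^*$ is automatically a left Quillen functor. For the forward direction, since every object is cofibrant, it suffices to show that the derived unit $X \to \wt{f^* X} \cotens_D C$ and the derived counit $f^*(Y \cotens_D C) \to Y$ (for $Y$ fibrant) are quasi-isomorphisms. In each case the object of interest has the form $Z \cotens_D C$ with $Z$ a fibrant (equivalently, coflat by Proposition \ref{prop: coflat=fibrant}) $D$-comodule, so Theorem \ref{thm: EMSS} applies with $E^2$-page $\mathsf{CoTor}^\ast_{H_\ast(D)}(H_\ast(Z), H_\ast(C))$. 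When $H_\ast(f)$ is an isomorphism, $H_\ast(C) \cong H_\ast(D)$ as $H_\ast(D)$-comodules, and this is the cofree (hence injective) $H_\ast(D)$-comodule on $\k$; therefore the higher $\mathsf{CoTor}$ groups vanish and the spectral sequence collapses to $H_\ast(Z)$. The derived unit and counit maps induce these isomorphisms.

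For the converse, the comodule $D$ is already fibrant in $\comod_D$ as it is cofree on $\k$, and the cotensor unit axiom gives $D \cotens_D C \cong C$ as $C$-comodules. Tracing through the adjunction bijection $\Hom_D(f^* X, Y) \cong \Homc(X, Y \cotens_D C)$ at $X = C$ and $Y = D$, the element corresponding to $\id_C$ is the inclusion $C \cong D \cotens_D C \hookrightarrow D \otimes C$ given by $(f \otimes \id)\Delta_C$; applying $(\id_D \otimes \varepsilon_C)$ then produces the counit $\epsilon_D = (\id_D \otimes \varepsilon_C)(f \otimes \id)\Delta_C = f$ by the counit axiom of $C$. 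Since $C$ is cofibrant and $D$ is fibrant, the Quillen equivalence forces $\epsilon_D = f$ to be a quasi-isomorphism.

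Finally, when $f$ is a quasi-isomorphism, the universal property of equalizers produces a natural comparison map $f^*(X \ccotens Y) \to f^* X \cotens_D f^* Y$ of $D$-comodules, giving $f^*$ an oplax monoidal structure. To check the derived version is an equivalence, apply Theorem \ref{thm: EMSS} to both sides: their $E^2$-pages are $\mathsf{CoTor}^\ast_{H_\ast(C)}(H_\ast X, H_\ast Y)$ and $\mathsf{CoTor}^\ast_{H_\ast(D)}(H_\ast X, H_\ast Y)$, which are naturally identified via $H_\ast(f)$. Combined with the underlying equivalence of $\infty$-categories from Theorem \ref{main thm} and Corollary \ref{main cor}, this upgrades $f^*$ to a symmetric monoidal equivalence of $\infty$-categories. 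The main obstacle will be verifying that the edge homomorphisms of these Eilenberg-Moore spectral sequences correctly identify the relevant unit, counit, and oplax structure maps, and that the derived oplax functor lifts to a fully coherent symmetric monoidal $\infty$-functor rather than merely a lax one.
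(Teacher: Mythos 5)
Your treatment of the Quillen pair and of both directions of the Quillen equivalence is essentially sound and close to the paper's. The paper packages your Eilenberg--Moore argument once and for all as Corollary \ref{cor: coflat preserves weak equivalences}: for $Z$ fibrant (coflat), $Z\cotens_D -$ preserves quasi-isomorphisms, so the counit at a fibrant $Y$ is just $Y\cotens_D f : Y\cotens_D C \to Y\cotens_D D\cong Y$ and is a weak equivalence whenever $f$ is. Citing that corollary also dissolves your worry about edge homomorphisms: one never needs to identify the map on $E^2$-pages, only to observe that the counit is literally $\mathrm{id}_Y\cotens_D f$ and apply naturality. (The paper in fact only checks the counit on fibrant objects, invoking Hovey's criterion together with the observation that $f^*$ reflects weak equivalences; checking the unit as you propose is redundant.) Your identification of the counit at $Y=D$ with $f$ itself is correct and matches the paper's converse argument.

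The genuine gap is in the symmetric monoidal upgrade, and it is exactly the one you flag at the end without closing. The comparison map $f^*(X\ccotens Y)\to f^*X\cotens_D f^*Y$ is an inclusion of equalizers that is \emph{not} an isomorphism in general, so $f^*$ is only colax monoidal, and showing that its structure maps become equivalences after localization does not by itself produce a symmetric monoidal functor of $\infty$-categories --- the universal property of the symmetric monoidal Dwyer--Kan localization (Proposition \ref{prop: compatibility of DK in sym mon}) takes as input a \emph{strong} monoidal functor on fibrant objects, and there is no general mechanism for promoting a colax functor with invertible derived structure maps to a coherent symmetric monoidal $\infty$-functor without further work. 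The paper's resolution is to change sides: the \emph{right} adjoint $-\cotens_D C$ carries a lax monoidal structure whose comparison map
\[
(X\cotens_D C)\ccotens(Y\cotens_D C)\longrightarrow (X\cotens_D Y)\cotens_D C
\]
is an honest isomorphism by Lemma \ref{lem: cotensor of cofree formula}, i.e.\ $-\cotens_D C$ is strong symmetric monoidal on the nose. Restricting to fibrant objects and composing with the symmetric monoidal localization of $\comod_C$ then yields the symmetric monoidal equivalence of $\infty$-categories directly from Proposition \ref{prop: compatibility of DK in sym mon}. Your Eilenberg--Moore comparison of $\mathsf{CoTor}$ over $H_*(C)$ and $H_*(D)$ is not needed once this strong monoidality is in hand.
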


\begin{proof}
The first statement follows directly from the fact that the functor $f^*$ preserves monomorphisms and weak equivalences. 
For the second statement, we shall apply \cite[1.3.16]{hovey}. Notice that $f^*$ reflects weak equivalences. Suppose first that $f$ is a weak equivalence. 
Now let $M$ be any fibrant $D$-comodule, the counit of the adjunction $M\cotens_D C \xrightarrow{\simeq} M\cotens_D D \cong M$
is a weak equivalence by Corollary \ref{cor: coflat preserves weak equivalences}. Conversely, if we suppose the adjunction to be a Quillen equivalence, then the map
\(
f\colon C\cong D\cotens_D C \longrightarrow D\cotens_D D\cong D
\)
must be a weak equivalence, as $D$ is always fibrant as a $D$-comodule.

The third statement, it follows from the universal property of the symmetric Dwyer-Kan localization \cite[4.1.7.4]{lurie1}. 
The functor $f^*\colon\comod_C \rightarrow \comod_D$ is colax monoidal with respect to the cotensor products. 
The induced lax monoidal map on the right adjoint:
\[
(M\cotens_D C) \ccotens (N\cotens_D C) \stackrel{\cong}\rightarrow (M\cotens_D N)\cotens_D C,
\]
is actually strong monoidal by associativity of the cotensor product, for any $D$-comodules $M$ and $N$. Thus we get a symmetric monoidal functor of $\infty$-categories:
\(
-\cotens_D C\colon\N((\comod_D)_f^\otimes) \longrightarrow \N((\comod_C)_f^\otimes).
\)
If we suppose $f$ to be a weak equivalence, then if we post-compose the functor with the symmetric monoidal Dwyer-Kan localization:
\(
 \N((\comod_C)_f^\otimes)\longrightarrow  \N((\comod_C)_f^\otimes)[\W^{-1}_\comod]\simeq \comodinf_C(\Dinf^{\geq 0}(\k)),
\)
which is strong monoidal, it sends weak equivalence in $\comod_D$ to equivalences in $\comodinf_C(\Dinf^{\geq 0}(\k))$.
Thus we obtain the desired equivalence of symmetric monoidal $\infty$-categories.
\end{proof}

\renewcommand{\bibname}{References}
\bibliographystyle{amsalpha}
\bibliography{biblio}
\end{document}